\numberwithin{equation}{section}
\newcommand{\PnC}{{\mathbb{P}^n(\mathbb{C})}}
\newcommand{\PolymCn}{\mathcal{P}_m(\mathbb{C}^n)}
\newcommand{\bfB}{\mathbf{B}}
\newcommand{\fg}{\mathfrak{g}}
\newcommand{\mA}{\mathcal{A}}
\newcommand{\Z}{\mathbb{Z}}
\newcommand{\N}{\mathbb{N}}
\newcommand{\C}{\mathbb{C}}
\newcommand{\B}{\mathbb{B}}
\newcommand{\R}{\mathbb{R}}
\newcommand{\T}{\mathbb{T}}
\theoremstyle{theorem}
\newtheorem{thm}{Theorem}[section]
\newtheorem{corollary}[thm]{Corollary}
\theoremstyle{definition}
\newtheorem{defn}[thm]{Definition}
\theoremstyle{remark}
\newtheorem{remark}[thm]{Remark}
\title[Pseudo-homogeneous symbols and moment maps]{Toeplitz operators, pseudo-homogeneous symbols and moment maps on the complex projective space}
\author[Morales-Ramos et al.]{Miguel Antonio Morales-Ramos, Ra\'ul Quiroga-Barranco \\ and Armando S\'anchez-Nungaray}
\begin{document}

\thanks{Research supported by SNI and Conacyt Grants}
\keywords{Toeplitz operator, Projective space, Moment map}
\subjclass[2010]{47B35, 53D20, 32A36, 32M15}

\maketitle

\begin{abstract}
	Following previous works for the unit ball (see \cite{Vasilevski-Pseudo}), we define quasi-radial pseudo-homogeneous symbols on the projective space and obtain the corresponding commutativity results for Toeplitz operators. A geometric interpretation of these symbols in terms of moment maps is developed. This leads us to the introduction of a new family of symbols, extended pseudo-homogeneous, that provide larger commutative Banach algebras generated by Toeplitz operators. This family of symbols provides new commutative Banach algebras generated by Toeplitz operators on the unit ball.
\end{abstract}

\section{Introduction}

The study of commutative algebras generated by Toeplitz operators has been a very interesting subject in the last decade. The first works studied commutative $C^*$-algebras generated by Toeplitz operators on the unit disk and the unit ball, where it was found an important connection with the geometry of the underlying space (see \cite{GQV-disk,QV-2007,QV-2008}). In particular, it was proved the commutativity of the $C^*$-algebras generated by Toeplitz operator with symbols invariant under the action of maximal Abelian groups. A dual study was carried out for the complex projective space $\PnC$ in \cite{QS-Projective}. It was proved that results similar to those of the unit ball hold for projective spaces, but in this case the symbols that yield commutative $C^*$-algebras are those that depend only on the radial part of the homogeneous coordinates.

On the other hand, Vasilevski \cite{Vasilevski2010} introduced a new family of commutative algebras generated by Toeplitz operators which are Banach algebras but not $C^*$-algebras. These Banach algebras are those generated by Toeplitz operators whose symbols are the so called quasi-radial quasi-homogeneous symbols on the unit ball. This was again brought into the setup of the projective space in \cite{QS-Quasi-Projective}. In this latter work, it was proved that the quasi-radial quasi-homogeneous symbols yield commutative Banach algebras generated by Toeplitz operators on each weighted Bergman space of the complex projective space. A remarkable fact found in \cite{QS-Quasi-Projective} is that it was also given a geometric description of the symbols by proving that the quasi-homogeneous symbols can be associated to an Abelian subgroup of holomorphic isometries and that one can also construct Lagrangian foliations over principal bundles over such Abelian subgroups. Furthermore, this was proved for both the projective space and the unit ball.

On the other hand, Garc\'ia and Vasilevski considered in \cite{Vasilevski2015} new families of commutative Banach algebras generated by Toeplitz operators over the Bergman space of the two-dimensional unit ball. These are obtained by a generalization of the quasi-homogeneous symbols over the two-dimensional unit ball. 

Recently Vasilevski \cite{Vasilevski-Pseudo} introduced two further and more general commutative Banach algebras over the unit ball. These algebras are generated by Toeplitz operator where the symbols are called quasi-radial pseudo-homogeneous symbols, with a variation to consider a product of spheres or a single sphere. Moreover, Vasilevski showed that the quasi-radial quasi-homogeneous symbols of both types have interesting properties. More precisely, if $a$ and $b$ symbols from the product of spheres type, then $T_{ab}=T_aT_b$. However, when the symbols $a$ and $b$ are from the case of a single sphere, then one still has $T_bT_a=T_aT_b$ but $T_{ab} \not=T_aT_b$.

Our goal is to extend the existence of such type of commutative Banach algebras generated by Toeplitz operators from the unit ball to the setup of the complex projective space. Thus we introduce the quasi-radial pseudo-homogeneous symbols for projective spaces, both the multi-sphere and single-sphere cases. Furthermore, we prove that the pseudo-homogeneous symbols have a geometric background which is supplied by the moment maps associated to Hamiltonian actions. The latter are given in our case by toral actions on projective spaces that restrict to the usual toral actions on complex spaces. In particular, our geometric interpretation for the pseudo-homogeneous symbols applies to the case of the unit ball as well. This allows us to raise some question about the existence of larger families of symbols that properly contain the pseudo-homogeneous ones. This motivates us to consider what we called extended pseudo-homogeneous symbols that embrace all the possibilities discussed so far for both the unit ball and the projective space.

The paper is organized as follows. After some preliminary remarks presented in Section~\ref{sec:geoprel}, in Sections~\ref{sec:qrph_symbols} and \ref{sec:singlesphere_symbols} we prove the existence of projective versions (dual to the unit ball) of commutative Banach algebras generated by Toeplitz operators whose symbols are quasi-radial pseudo-homogeneous, first for the multi-sphere case and then for the single-sphere case. This is achieved on each weighted Bergman space. In Section~\ref{sec:momentmaps} we consider some aspects of symplectic geometry as Hamiltonian actions, moment maps, symplectic reductions and  Delzant polytopes. Using these tools,  we give a geometric description of  quasi-radial pseudo-homogeneous symbols and we indicate how these techniques may be used to extend these families of symbols. Thus, we introduce in Section~\ref{sec:extended-symbols} the extended pseudo-homogeneous symbols, which contain many families contained up to this point. In particular, we exhibit commutative Banach algebras generated by Toeplitz operator with extended quasi-radial pseudo-homogeneous symbols. It is important to note that for this kind of symbols we have that  $T_{ab} \not=T_aT_b$ in general. Finally, we show in Section~\ref{sec:extended_unit_ball} how to obtain commutative Banach algebras in the case of the unit ball for our extended pseudo-homogeneous symbols.

\section{Geometry and analysis on $\PnC$}\label{sec:geoprel}
On the $n$-dimensional complex projective space $\PnC$ and for every $j = 1, \dots, n$ we consider on the open set
\[
	U_j=\{[w]\in\PnC:w_j\neq 0\}
\]
the homogeneous coordinates given by the holomorphic chart $\varphi_j:U_j\rightarrow \C^n$ where
\[
	\varphi_j([w])=\frac{1}{w_j}\left(w_0,\dots ,\widehat{w}_j,\dots ,w_n\right)=(z_1,\dots ,z_n).
\]
These yield the holomorphic atlas of $\PnC$.

The canonical K\"ahler structure on $\PnC$ is given by the closed $(1,1)$-form
\[
	\omega = i \partial\bar{\partial} \log f_j
\]
on $U_j$ for every $j = 0, \dots, n$, where 
\[
	f_j([w])=\sum_{k=0}^n\frac{w_k\bar{w}_k}{w_j\bar{w}_j}=1+\sum_{k=1}^nz_k\bar{z}_k.
\]
The corresponding Riemannian metric is the well known Fubini-Study metric. The volume element of $\PnC$ with respect to the Fubini-Study metric is defined by
\[
	\Omega=\frac{1}{(2\pi)^n}\omega^n.
\]

On the other hand, the hyperplane line bundle on $\PnC$ is given by:
\[
	H = \{ ([w],\lambda) \in \PnC\times\C^{n+1} : \lambda \in (\C w)^*\},
\]
which assigns to every point in $\PnC$ the dual of the line in $\C^{n+1}$ that such point represents. The bundle $H$ has a natural Hermitian metric $h$ inherited from the usual Hermitian inner product on $\C^{n+1}$. As usual, let us denote by $H^m$ the $m$-th tensor power of $H$ and by $h^{(m)}$ its corresponding Hermitian inner product. Hence, we denote by $L^2(\PnC, H^m )$ the $L^2$-completion of the space of smooth sections $\Gamma(\PnC, H^m)$ with respect to the Hermitian inner product:
\[
	\langle\zeta,\xi\rangle_m =\int_{\PnC} h^{(m)}(\zeta,\xi)\Omega,
\]
where $\zeta,\xi \in \Gamma(\PnC,H^m)$. Since $\PnC$ is compact, the space of global holomorphic sections of $H^m$, denoted by $\mA^2_m(\PnC)$, is finite dimensional and so closed in $L^2(\PnC,H^m)$. This yields the so called weighted Bergman space on $\PnC$ with weight $m$. 

We note that there is a trivialization of $H^m$ on $U_0$ given by
\begin{align*}
	\C^n \times \C &\rightarrow H^m|_{U_0} \\
	(z, c) &\mapsto ([1,z], \lambda(1,z) \mapsto c\lambda).
\end{align*}
This map together with $\varphi_0$ yield an isometry $L^2(\PnC,H^m) \rightarrow L^2(\C^n, \nu_m)$ where 
\[
	\nu_m = \frac{(n+m)!}{\pi^n m!}\frac{dV(z)}{(1+|z_1|^2 + \dots + |z_n|^2)^{n+m+1}},
\]
and $dV(z)$ denotes the Lebesgue measure on $\C^n$ (see \cite{QS-Projective} for further details). With respect to this isometry, the Bergman space $\mA^2_m(\PnC)$ is identified with the space of polynomials of degree at most $m$ on $\C^n$ which is denoted by $\PolymCn$. 

In the rest of this work we will use the multi-index notation without further notice. In particular, the set of monomials $z^\alpha$ on $z = (z_1, \dots, z_n) \in \C^n$ of degree at most $m$ is a basis for $\mA^2_m(\PnC)$. This basis is indexed by the set $J_n(m) = \{\alpha \in \N^n : |\alpha| \leq m \}$. And it is easy to compute that
\begin{equation}\label{eq:basis-Bergman}
	\bigg\{\bigg(\frac{m!}{\alpha!(m - |\alpha|)!}\bigg)^{\frac{1}{2}}z^\alpha: \alpha\in J_n(m) \bigg\}
\end{equation}
is an orthonormal basis of $\mA^2_m(\PnC)$.

On the other hand, the Bergman projection for $\mA^2_m(\PnC)$ is given by
\[
	(B_m\psi)(z)=\frac{(n+m)!}{\pi^n m!} \int_{\C^n} 
	\frac{\psi(w)(1+z_1\overline{w}_1\dots +z_n\overline{w}_n)^m}{(1+w_1\overline{w}_1+\dots +w_n\overline{w}_n)^{n+m+1}}  dV(w).
\]
Finally, the Toeplitz operator $T_a$ on $\mA^2_m(\PnC)$
with bounded symbol $a \in L^\infty(\PnC)$ is given by
\[
	T_a(f) = B_m(a f),
\]
for every $f \in \mA^2_m(\PnC)$.

\section{Quasi-radial pseudo-homogeneous symbols}\label{sec:qrph_symbols}
The symbols introduced in this section follow those considered in \cite{Vasilevski-Pseudo} as well as the notions developed in \cite{QS-Quasi-Projective}.

Let $k=(k_0,\dots,k_\ell) \in \N^{\ell+1}$ be a multi-index so that $|k| = n+1$. This partition provides a decomposition of the coordinates of $w \in \C^{n+1}$ as $w = (w_{(0)}, \dots, w_{(\ell)})$, where
\[
w_{(j)} = (w_{k_0+\dots+k_{j-1}+1}, \dots, w_{k_0+\dots+k_{j}})=(w_{j,1}, \dots, w_{j,k_{j}}),
\]
for every $j = 0, \dots, \ell$, and the empty sum is $0$ by convention. We further decompose $w \in \C^{n+1}$ as follows. For every $j = 0, \dots, \ell$ we define $r_{j} = |w_{(j)}|$. And for any such $j$ we write
\[
	w_{j,l} = r_j s_{j,l}t_{j,l},  
\]
where 
\[
	s_{j,l} = \frac{|w_{j,l}|}{r_j},\quad  t_{j,l} = \frac{w_{j,l}}{|w_{j,l}|}.
\]
Correspondingly, we write  $s_{(j)}=(s_{j,1}, \dots, s_{j,k_j})$ and $t_{(j)} = (t_{j,1}, \dots, t_{j,k_j}) \in \T^{k_j}$. We observe that for every $j$ we have $s_{(j)} \in S^{k_j - 1}_+ = S^{k_j - 1} \cap \R^{k_j}_+$, the subset of the real $(k_j-1)$-sphere with nonnegative coordinates.

\begin{defn}\label{def:pseudo-homogeneous-symbol}
A symbol $\psi \in L^\infty(\PnC)$ is called $k$-pseudo-homogeneous if it has the form
\[  
	\psi([w])=b(s_{(0)}, \dots, s_{(\ell)})t^p=b(s_{(0)}, \dots, s_{(\ell)})\prod_{j=0}^{\ell}t_{(j)}^{p_{(j)}},
\]
where $b(s_{(0)}, \dots, s_{(\ell ) })\in L^\infty(S_+^{k_0-1}\times\dots\times S_+^{k_\ell-1})$ and $p=(p_0,\dots ,p_n)\in\Z^{n+1}$ with $|p|=0$.
\end{defn}

A particularly useful case is that where $b(s_{(0)},\dots,s_{(\ell ) })=\prod_{j=0}^\ell b_j(s_{(j)})$ with $b_j\in L_\infty(S_+^{k_j-1})$, $j=0,\dots , \ell$.
 
It is easily seen from the definition of the variables involved that the value of a pseudo-homogeneous symbol is well defined in terms of homogeneous coordinates. In fact, we observe that the condition $|p| = 0$ is used to have a well defined expression in Definition~\ref{def:pseudo-homogeneous-symbol}. This can be checked, for example, following the arguments from Section~3 in \cite{QS-Quasi-Projective}. Furthermore, the proof of Lemma~3.6 from the previous reference ensures that we can take $k_0 = 1$ without any loss of generality. Hence, from now on we will consider partitions of the form $(1,k)$ where $k \in \N^\ell$ is a partition of $n$ and we will also call the corresponding symbols $k$-pseudo-homogeneous. With such restriction, the general form of a pseudo-homogeneous symbol is given by
\[  
	\psi([w])=b(s_{(1)},\dots ,s_{(\ell) })t^p=b(s_{(1)},\dots ,s_{(\ell) })\prod_{j=1}^{\ell}t_{(j)}^{p_{(j)}},
\]
since $s_{(0)} = t_{(0)} = 1$.

This convention allows us to consider a natural corresponding expression for the $k$-pseudo-homogeneous symbols on $\C^n$ through the canonical embedding $\C^n \hookrightarrow \PnC$ given by the chart $\varphi_0$. More precisely, the $k$-pseudo-homogeneous symbol $\psi$ associated to $p$ and $b$ on $\C^n$ is given by
\[
	z \mapsto \psi([1,z])=b(s_{(1)},\dots ,s_{(\ell)})\prod_{j=1}^\ell t_{(\ell)}^{p_{(j)}}, 
\]

Next, we consider the quasi-radial symbols on $\PnC$ as first introduced in \cite{QS-Projective}.

\begin{defn}\label{def:quasi-radial}
	Let $k=(k_0,\dots ,k_\ell)\in\N^{\ell+1}$ be a partition of $n+1$. A $k$-quasi-radial symbol is a function $a \in L^\infty(\PnC)$ that satisfies 
	\[
		a([w])=\tilde{a}(|w_{(0)}|,\dots , |w_{(\ell)}|)
	\]
	for some function $\tilde{a}:[0,+\infty)^{\ell+1}\rightarrow\C$ which is homogeneous of degree 0.
\end{defn}

A similar discussion as the one provided above applies in this case (see \cite{QS-Projective}). Hence, we will consider $k_0 = 1$ and $k$-quasi-radial symbols on $\C^n \hookrightarrow \PnC$ for $k \in \N^\ell$. Their corresponding expression is given by 
\[
	z \mapsto a([1,z]) = \widetilde{a}(|z_{(1)}|, \dots, |z_{(\ell)}|)
\]
defined for $z \in \C^n$ and $\widetilde{a} : \R^\ell \rightarrow \C$ some function.

Collecting the previous two types of symbols we obtain the following.

\begin{defn}
	Let $k \in \N^\ell$ be a partition of $n$. A $k$-quasi-radial $k$-pseudo-homogeneous symbol is a function $\PnC \rightarrow \C$ of the form $a\psi$ where $a$ is $k$-quasi-radial symbol and $\psi$ is a $k$-pseudo-homogeneous symbol.
\end{defn}

By the previous discussion, for a partition $k \in \N^\ell$ of $n$ we will consider symbols defined on $\C^n$ of the form
\begin{equation}\label{eq:qrph-symbol}
	\psi(z) = a(|z_{(1)}|, \dots, |z_{(\ell)}|) \prod_{j=1}^\ell b_j(s_{(j)})t_{(j)}^{p_{(j)}},
\end{equation}
where $p \in \Z^\ell$ satisfies $|p| = 0$.

It follows from Lemma~3.8 in \cite{QS-Quasi-Projective} that for a given $k$-quasi-radial symbol $a \in L^\infty(\C^n)$ the Toeplitz operator $T_a$ acting on $\mA^2_m(\PnC) \simeq \PolymCn$ satisfies
\[
	T_a(z^\alpha) = \gamma_{a,k,m}(\alpha) z^\alpha
\]
for every $\alpha \in \N^\ell$, where $\gamma_{a,k,m} : \N^\ell \rightarrow \C$ is given by
\begin{equation}\label{eq:gamma-qr}
	\begin{aligned}
		&\gamma_{a,k,m}(\alpha) = \\
		&=\frac{(n+m)!}{(m - |\alpha|)!\prod_{j=1}^\ell(k_j - 1 + |\alpha_{(j)}|)!}
			\int_{\R_+^\ell}\frac{a(\sqrt{r_1}, \dots, \sqrt{r_\ell})}{(1+r)^{n+m+1}}
			\prod_{j=1}^\ell r_j^{|\alpha_{(j)}| + k_j -1}\dif r_j
	\end{aligned}
\end{equation}

We now extend to quasi-radial pseudo-homogeneous symbols the previous result and the corresponding one for quasi-homogeneous symbols found in \cite{QS-Quasi-Projective}. Note that the following computation is dual to the one found in \cite{Vasilevski-Pseudo}. In what follows we will denote $b_j(\sqrt{s_{(j)}}) = b_j(\sqrt{s_{j,1}}, \dots, \sqrt{s_{j,k_j}})$. We will also consider the following sets
\begin{equation}\label{eq:BandDelta}
	\begin{aligned}
		\bfB_+^{k_j-1} &= \{ x \in \R_+^{k_j-1} \mid x_1^2 + \dots + x_{k_j - 1}^2 \leq 1 \} \\
		\Delta_{k_j - 1} &= \{ x \in \R_+^{k_j-1} \mid x_1 + \dots + x_{k_j - 1} \leq 1 \}.
	\end{aligned} 
\end{equation}

\begin{thm}\label{thm:Toeplitz_qrph}
	Let $\psi \in L^\infty(\C^n)$ be a $k$-quasi-radial $k$-pseudo-homogeneous symbol whose expression is given as in \eqref{eq:qrph-symbol}. Then, the Toeplitz operator $T_\psi$ acting on the weighted Bergman space $\mA^2_m(\PnC) \simeq \PolymCn$ satisfies
	\[
		T_\psi (z^\alpha) =
		\begin{cases}
			\gamma_{\psi,m}(\alpha) z^{\alpha + p} & \text{ if } \alpha + p \in J_n(m) \\
			0 & \text{ if } \alpha + p \notin J_n(m)
		\end{cases}
	\]
	for every $\alpha \in J_n(m)$, where
	\begin{align*}
		&\gamma_{\psi,m}(\alpha) =  \\
		&=\frac{(n+m)!}{(m-|\alpha|)!\prod_{j=1}^\ell(k_j-1+|\alpha_{(j)}|+\frac{1}{2}|p_{(j)}|)!} \\
		&\qquad\times
			\int_{\R^\ell_+} \frac{a(\sqrt{r_1},\dots ,\sqrt{r_\ell}) \prod_{j=1}^\ell r_j^{|\alpha_{(j)}|+\frac{1}{2}|p_{(j)}|+k_j-1}\dif r_j}{(1+r_1+\dots + r_n)^{n+m+1}} \\ 
		&\quad \times
			\prod_{j=1}^\ell \bigg(
			\frac{(k_j-1+|\alpha_{(j)}|+ \frac{1}{2}|p_{(j)}|)!}{\prod_{l=1}^{k_j}(\alpha_{j,l}+p_{j,l})!}  \\
		&\qquad \int_{\Delta_{k_j-1}} b_j(\sqrt{s_{(j)}}) 
			 \bigg(1-\sum_{l=1}^{k_j-1}s_{j,l}\bigg)^{\alpha_{j,k_j} + \frac{1}{2}p_{j,k_j}}
				\prod_{l=1}^{k_j-1} s_{j,l}^{\alpha_{j,l}+\frac{1}{2}p_{j,l}} \dif s_{j,l} \bigg).
	\end{align*}
\end{thm}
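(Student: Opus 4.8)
The plan is to compute $T_\psi(z^\alpha) = B_m(\psi\cdot z^\alpha)$ directly from the integral formula for the Bergman projection and then reduce everything to iterated one-dimensional integrals. First I would substitute the symbol \eqref{eq:qrph-symbol} and the monomial $z^\alpha$ into $B_m$, and expand the reproducing factor by the multinomial theorem,
\[
	(1 + z_1\bar w_1 + \dots + z_n\bar w_n)^m = \sum_{\beta\in J_n(m)}\frac{m!}{\beta!(m-|\beta|)!}\,z^\beta\,\bar w^\beta .
\]
Interchanging the finite sum with the integral expresses $T_\psi(z^\alpha)$ as $\sum_{\beta\in J_n(m)} c_\beta\, z^\beta$, where $c_\beta$ is a constant multiple of $\int_{\C^n}\psi(w)\,w^\alpha\bar w^\beta(1+|w|^2)^{-(n+m+1)}\,dV(w)$. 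The whole statement thus reduces to evaluating these integrals and deciding which $\beta$ survive.

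Next I would pass to the coordinates $w_{j,l}=r_j s_{j,l}t_{j,l}$ adapted to the partition, for which a direct Jacobian computation gives $dV(w_{(j)}) = r_j^{2k_j-1}\big(\prod_l s_{j,l}\big)\,dr_j\,d\sigma(s_{(j)})\,\prod_l d\theta_{j,l}$, where $t_{j,l}=e^{i\theta_{j,l}}$ and $d\sigma$ is surface measure on $S^{k_j-1}_+$. Since $\psi$ is bounded and the kernel decays fast enough, Fubini lets me factor each integral into a toral part in $t$, a radial part in $r$, and a spherical part in $s$. Writing $w^\alpha\bar w^\beta = \prod_{j,l}(r_j s_{j,l})^{\alpha_{j,l}+\beta_{j,l}}t_{j,l}^{\alpha_{j,l}}\bar t_{j,l}^{\beta_{j,l}}$ and incorporating the factor $t_{(j)}^{p_{(j)}}$ from $\psi$, the toral integrals become products of $\int_0^{2\pi}e^{i\theta_{j,l}(\alpha_{j,l}+p_{j,l}-\beta_{j,l})}\,d\theta_{j,l}$. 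By orthogonality of the characters of $\T^{k_j}$ these vanish unless $\beta_{j,l}=\alpha_{j,l}+p_{j,l}$ for every $j,l$, each surviving factor contributing $2\pi$. This is the selection rule forcing $\beta=\alpha+p$. Since $|\alpha+p|=|\alpha|$ because $|p|=0$, the term $z^{\alpha+p}$ lies in the sum precisely when $\alpha+p\in J_n(m)$, i.e.\ when $\alpha+p$ has nonnegative entries; otherwise no admissible $\beta$ survives and $T_\psi(z^\alpha)=0$, which accounts for the two cases of the statement.

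It then remains to evaluate the surviving radial and spherical integrals with $\beta=\alpha+p$. For the radial factor I would substitute $r_j^2\mapsto r_j$; since the power of the old $r_j$ equals $\sum_l(\alpha_{j,l}+\beta_{j,l})+2k_j-1 = 2|\alpha_{(j)}|+\sum_l p_{j,l}+2k_j-1$, this produces the integral over $\R^\ell_+$ in the statement with exponent $|\alpha_{(j)}|+\tfrac12|p_{(j)}|+k_j-1$ (reading $|p_{(j)}|=\sum_l p_{j,l}$, consistently with $|p|=0$) together with a factor $2^{-\ell}$. For each spherical factor I would apply the sphere-to-simplex change of variables $s_{j,l}^2\mapsto s_{j,l}$; the Gaussian/Dirichlet identity
\[
	\int_{S^{k_j-1}_+} G\big(s_{j,1}^2,\dots,s_{j,k_j}^2\big)\,d\sigma(s_{(j)})
	= \frac{1}{2^{k_j-1}}\int_{\Delta_{k_j-1}} G(x)\prod_{l=1}^{k_j}x_l^{-1/2}\,dx,
	\qquad x_{k_j}=1-\sum_{l<k_j}x_l,
\]
turns the $s$-integral into the stated integral over $\Delta_{k_j-1}$, with integrand $b_j(\sqrt{s_{(j)}})$ times $\prod_{l}s_{j,l}^{\alpha_{j,l}+\frac12 p_{j,l}}$ and the factor $(1-\sum_{l<k_j}s_{j,l})^{\alpha_{j,k_j}+\frac12 p_{j,k_j}}$ from the last coordinate, plus a factor $2^{-(k_j-1)}$.

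Finally I would assemble all constants. The Bergman prefactor $\tfrac{(n+m)!}{\pi^n m!}$, the multinomial coefficient $\tfrac{m!}{(\alpha+p)!(m-|\alpha|)!}$, the toral factor $(2\pi)^n$, and the Jacobian powers of two combine so that $(2\pi)^n/\pi^n=2^n$ cancels exactly against $2^{-\ell}\prod_j 2^{-(k_j-1)}=2^{-n}$ (using $\sum_j k_j=n$), while $(\alpha+p)!=\prod_{j,l}(\alpha_{j,l}+p_{j,l})!$ reproduces the denominators in the product and $(m-|\alpha+p|)!=(m-|\alpha|)!$. Inserting the harmless ratios $A_j!/A_j!$ with $A_j=k_j-1+|\alpha_{(j)}|+\tfrac12|p_{(j)}|$ regroups the expression into the stated form, where these half-integer factorials are read as Gamma values. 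I expect the main obstacle to be the bookkeeping of the measure and the normalizing constants --- in particular getting the sphere-to-simplex Jacobian and the exact cancellation of the powers of $2$ and $\pi$ right; the conceptual core, namely the character orthogonality yielding $\beta=\alpha+p$, is comparatively immediate.
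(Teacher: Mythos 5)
Your proposal is correct and follows essentially the same route as the paper: both reduce $T_\psi(z^\alpha)$ to the integrals $\int_{\C^n}\psi(w)\,w^\alpha\bar w^\beta(1+|w|^2)^{-(n+m+1)}\,dV(w)$, use character orthogonality on $\T^n$ to force $\beta=\alpha+p$, pass to per-group spherical coordinates with the substitution $r_j^2\mapsto r_j$, and convert the sphere integrals to simplex integrals via the square-root (Dirichlet) change of variables, with matching bookkeeping of the powers of $2$ and $\pi$ and of $(\alpha+p)!$. Your only deviations --- expanding the Bergman kernel by the multinomial theorem instead of pairing against the monomial basis and dividing by $\langle z^{\alpha+p},z^{\alpha+p}\rangle_m$, and merging the ball parametrization with the $s_{j,l}^2\mapsto s_{j,l}$ substitution into a single identity --- are cosmetic, and your explicit reading of $|p_{(j)}|=\sum_l p_{j,l}$ and of the half-integer factorials as Gamma values agrees with the paper's implicit conventions.
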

\begin{proof}
It is enough to compute the following inner product in the Bergman space $\mA^2_m(\PnC) \simeq \PolymCn$.
\begin{align*}
	&\langle T_\psi z^\alpha, z^\beta \rangle = \langle \psi z^\alpha,z^\beta \rangle = \\
		&= \frac{(n+m)!}{\pi^n m!}
			\int_{\C^n}\frac{\psi(z) z^\alpha \overline{z}^\beta}{(1+|z_1|^2+\dots + |z_n|^2)^{n+m+1}} \dif V(z)  \\
		&= \frac{(n+m)!}{\pi^n m!} 
			\int_{\R_+^n} \frac{a(r_1, \dots, r_\ell) 
				\prod_{j=1}^\ell b_j(s_{(j)}) \prod_{j=1}^n |z_j|^{\alpha_j + \beta_j + 1}}{(1+|z_1|^2+\dots + |z_n|^2)^{n+m+1}} \prod_{j=1}^n \dif |z_j| \\
		&\quad\times \prod_{u=1}^n\int_\T t_u^{\alpha_u + p_u - \beta_u}\frac{\dif t_u}{it_u}
\intertext{The last product of integrals is nonzero if and only if $\beta = \alpha + p$, and in this case it equals $(2\pi)^n$. Hence, we will assume that $\beta = \alpha + p$ for which we have}
		&= \frac{2^n(n+m)!}{m!}
			\int_{\R_+^n} \frac{a(r_1, \dots, r_\ell) 
				\prod_{j=1}^\ell b_j(s_{(j)}) 
				\prod_{j=1}^n |z_j|^{2\alpha_j + p_j + 1}}{(1 + |z_1|^2 + \dots + |z_n|^2)^{n+m+1}} 
				\prod_{j=1}^n\dif |z_j|
\intertext{If for every $j = 1, \dots, \ell$ we denote by $\dif S_j$ the volume element on $S^{k_j-1}$, apply spherical coordinates on $\R_+^{k_j}$ and use the identity $r_{j,l} = r_j s_{j,l}$, then we now have}
		&= \frac{2^n(n+m)!}{m!} 
		\int_{\R_+^\ell \times \prod_{j=1}^\ell S^{k_j-1}}
			\frac{a(r_1, \dots, r_\ell)}{(1+r_1^2 + \dots + r_\ell^2)^{n+m+1}} \\
		&\quad\times 
			 \prod_{j=1}^\ell b_j(s_{(j)})
			 \prod_{l=1}^{k_j} (r_j s_{j,l})^{2\alpha_{j,l} + p_{j,l} + 1}
			 \prod_{j=1}^\ell  r_j^{k_j-1} \dif r_j \dif S_j \\
		&= \frac{2^n(n+m)!}{m!}
			\int_{\R^\ell_+}\frac{a(r_1,\dots ,r_\ell) 
				\prod_{j=1}^\ell r_j^{2|\alpha_{(j)}|+|p_{(j)}|+2k_j-1} }{(1+r_1^2+\dots + r_\ell^2)^{n+m+1}}
				\prod_{j=1}^\ell \dif r_j \\
		&\quad\times 
			\prod_{j=1}^\ell  
			\int_{S^{k_j-1}} b_j(s_{(j)})\prod_{l=1}^{k_j} s_{j,l}^{2\alpha_{j,l} + p_{j,l} + 1} \dif S_j \\
		&= \frac{2^{n-\ell}(n+m)!}{ m!}
			\int_{\R^\ell_+}\frac{a(\sqrt{r_1},\dots ,\sqrt{r_\ell}) 
				\prod_{j=1}^\ell 
					r_j^{|\alpha_{(j)}|+\frac{1}{2}|p_{(j)}|+k_j-1}}{(1 + r_1 + \dots + r_\ell)^{n+m+1}}
					\prod_{j=1}^\ell \dif r_j \\ 
		&\quad\times 
			\prod_{j=1}^\ell  
			\int_{S^{k_j-1}} b_j(s_{(j)}) 
			\prod_{l=1}^{k_j} s_{j,l}^{2\alpha_{j,l}+p_{j,l}+1}  \dif S_j  \\
		&= \frac{(n+m)!}{m!}
			\int_{\R^\ell_+}\frac{a(\sqrt{r_1},\dots ,\sqrt{r_\ell}) 
				\prod_{j=1}^\ell 
					r_j^{|\alpha_{(j)}|+\frac{1}{2}|p_{(j)}|+k_j-1}}{(1 + r_1 + \dots + r_\ell)^{n+m+1}}
					\prod_{j=1}^\ell \dif r_j \\
		&\quad\times 
			\prod_{j=1}^\ell \bigg(2^{k_j-1} 
				\int_{S^{k_j-1}} b_j(s_{(j)})\prod_{l=1}^{k_j} s_{j,l}^{2\alpha_{j,l}+p_{j,l}+1}  
						\dif S_j \bigg)  \\
\intertext{Applying the identity $s_{j,k_j} = \sqrt{1-(s_{j,1}^2 + \dots + s_{j,k_j-1}^2)}$ together with the corresponding expression of $\dif S_j$ in terms of the coordinates $s_{j,1}, \dots, s_{j,k_j-1}$ on $\bfB_+^{k_j-1}$ we have}
		&= \frac{(n+m)!}{m!}
			\int_{\R^\ell_+} \frac{a(\sqrt{r_1}, \dots, \sqrt{r_\ell}) 
				\prod_{j=1}^\ell 
					r_j^{|\alpha_{(j)}|+\frac{1}{2}|p_{(j)}|+k_j-1}}{(1+r_1+\dots + r_\ell)^{n+m+1}}
					\prod_{j=1}^\ell \dif r_j  \\ 
		&\quad\times 
			\prod_{j=1}^\ell \bigg(2^{k_j-1} 
			\int_{\mathbf{B}_+^{k_j-1}} 
				b_j(s_{(j)}) \prod_{l=1}^{k_j-1} s_{j,l}^{2\alpha_{j,l}+p_{j,l}+1}
				\bigg(1- \sum_{l=1}^{k_j-1} s_{j,l}^2\bigg)^{\alpha_{j,k_j} + \frac{1}{2}p_{j,k_j}} 
					\prod_{l=1}^{k_j-1} \dif s_{j,l} \bigg) \\
		&= \frac{(n+m)!}{m!}
			\int_{\R^\ell_+} \frac{a(\sqrt{r_1}, \dots, \sqrt{r_\ell}) 
				\prod_{j=1}^\ell 
					r_j^{|\alpha_{(j)}|+\frac{1}{2}|p_{(j)}|+k_j-1}}{(1+r_1+\dots + r_\ell)^{n+m+1}}
					\prod_{j=1}^\ell \dif r_j  \\ 
		&\quad\times 
			\prod_{j=1}^\ell  
				\int_{\Delta_{k_j-1}} b_j(\sqrt{s_{(j)}})
					\prod_{l=1}^{k_j-1}  s_{j,l}^{\alpha_{j,l}+\frac{1}{2}p_{j,l}} 
						\bigg(1-\sum_{l=1}^{k_j-1} s_{j,l}\bigg)^{\alpha_{j,k_j}+\frac{1}{2}p_{j,k_j}} 
						\prod_{l=1}^{k_j-1} \dif s_{j,l}.
\end{align*}
On the other hand, we have
\[
	\langle z^{\alpha + p}, z^{\alpha + p}\rangle_m 
		= \frac{(\alpha+p)!(m-|\alpha|)!}{m!} 
		= \frac{(m-|\alpha|)!\prod_{j=1}^\ell\prod_{l=1}^{k_j}(\alpha_{j,l}+p_{j,l})!}{m!},
\]
because we assumed that $|p|=0$. The result now follows from this.
\end{proof}

The following is a particular case.

\begin{corollary}\label{cor:Toeplitz_qrph_pj0}
	For $\psi$ as above, we assume that $|p_{(j)}| = 0$ for every $j = 1, \dots, \ell$. Then, the function $\gamma_{\psi,m} : J_n(m) \rightarrow \C$ from Theorem~\ref{thm:Toeplitz_qrph} satisfies the following.
	\begin{align*}
		&\gamma_{\psi,m}(\alpha) =  \\
		 &= \frac{(n+m)!}{(m-|\alpha|)! \prod_{j=1}^\ell(k_j-1+|\alpha_{(j)}|)!}
			\int_{\R^\ell_+} \frac{a(\sqrt{r_1},\dots, \sqrt{r_\ell}) \prod_{j=1}^\ell r_j^{|\alpha_{(j)}|+k_j-1}\dif r_j}{(1+r_1+\dots + r_n)^{n+m+1}} \\ 
		 &\quad\times 
			\prod_{j=1}^\ell \Bigg(\frac{(k_j-1+|\alpha_{(j)}|)!}{\prod_{l=1}^{k_j}(\alpha_{j,l}+p_{j,l})!} \\ 
		 &\qquad
			 	\int_{\Delta_{k_j-1}} b_j(\sqrt{s_{(j)}})
			 	\bigg(1-\sum_{l=1}^{k_j-1}s_{j,l}\bigg)^{\alpha_{j,k_j} + \frac{1}{2}p_{j,k_j}}
			 	\prod_{l=1}^{k_j-1} s_{j,l}^{\alpha_{j,l} + \frac{1}{2}p_{j,l}} \dif s_{j,l}\Bigg),
	\end{align*}	
	for every $\alpha \in J_n(m)$ such that $\alpha + p \in J_n(m)$, and it is zero otherwise.
\end{corollary}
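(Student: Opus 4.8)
The plan is to obtain this corollary as a direct specialization of Theorem~\ref{thm:Toeplitz_qrph} to the hypothesis $|p_{(j)}| = 0$ for every $j = 1, \dots, \ell$; no genuinely new computation is needed beyond substituting this condition into the formula for $\gamma_{\psi,m}(\alpha)$ already established there. First I would record that the hypothesis forces $\tfrac{1}{2}|p_{(j)}| = 0$ in every place where that quantity enters the expression from the theorem. Concretely, the leading factorial $(k_j - 1 + |\alpha_{(j)}| + \tfrac{1}{2}|p_{(j)}|)!$, which appears both in the outer denominator and inside the product over $j$, collapses to $(k_j - 1 + |\alpha_{(j)}|)!$, and the exponent $|\alpha_{(j)}| + \tfrac{1}{2}|p_{(j)}| + k_j - 1$ of $r_j$ in the radial integrand reduces to $|\alpha_{(j)}| + k_j - 1$. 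Carrying out these substitutions term by term yields exactly the displayed formula.

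The one point that requires care is to distinguish the hypothesis $|p_{(j)}| = 0$, which only constrains the sum $\sum_{l=1}^{k_j} p_{j,l}$ to vanish, from the much stronger condition $p_{(j)} = 0$. Because the individual components $p_{j,l}$ may remain nonzero, the integrals over the simplices $\Delta_{k_j-1}$ retain the exponents $\alpha_{j,l} + \tfrac{1}{2} p_{j,l}$ and $\alpha_{j,k_j} + \tfrac{1}{2} p_{j,k_j}$, and the factor $\prod_{l=1}^{k_j}(\alpha_{j,l} + p_{j,l})!$ in the denominator is likewise unaffected. These pieces are therefore copied verbatim from the theorem, with no simplification. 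I would also note in passing that the hypothesis is consistent with the standing assumption $|p| = 0$ of Definition~\ref{def:pseudo-homogeneous-symbol}, since $|p| = \sum_{j=1}^\ell |p_{(j)}|$.

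Finally, the case distinction and the support condition $\alpha + p \in J_n(m)$, together with the vanishing of $T_\psi(z^\alpha)$ otherwise, are inherited directly from Theorem~\ref{thm:Toeplitz_qrph}, so nothing further must be verified. Since every modification is a mechanical simplification of an already-proved identity, I do not anticipate any real obstacle; the only risk is a bookkeeping slip in tracking which occurrences of $|p_{(j)}|$ vanish and which occurrences of the individual $p_{j,l}$ survive, and the \emph{remark} above is meant precisely to guard against that confusion.
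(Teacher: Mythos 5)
Your proposal is correct and matches the paper's treatment: the paper states this corollary without a separate proof, presenting it as the immediate specialization of Theorem~\ref{thm:Toeplitz_qrph} obtained by substituting $|p_{(j)}|=0$, exactly as you do. Your explicit caution that $|p_{(j)}|=0$ does not force the individual components $p_{j,l}$ to vanish—so the simplex integrals and the factor $\prod_{l=1}^{k_j}(\alpha_{j,l}+p_{j,l})!$ survive unchanged—is precisely the right bookkeeping point.
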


We observe that Corollary~\ref{cor:Toeplitz_qrph_pj0} reduces to formula \eqref{eq:gamma-qr} for quasi-radial symbols. Furthermore, for a symbol of the form 
\[
	\psi([w]) = b_j(s_{(j)}) t_{(j)}^{p_{(j)}},
\]
where $j$ is fixed and $p \in \Z^n$ satisfies $|p_{(j)}| = 0$ and $p_{(l)} = 0$ for every $l \not= j$, Corollary~\ref{cor:Toeplitz_qrph_pj0} gives the following expression 
\begin{align*}
	&\gamma_{b_j,k,p_{(j)}}(\alpha) = \\
		&= \frac{(k_j-1+|\alpha_{(j)}|)!}{\prod_{l=1}^{k_j}(\alpha_{j,l}+p_{j,l})!}
			\int_{\Delta_{k_j-1}} b_j(\sqrt{s_{(j)}}) 
			\bigg(1-\sum_{l=1}^{k_j-1}s_{j,l}\bigg)^{\alpha_{j,k_j} + 
			\frac{1}{2}p_{j,k_j}} \prod_{l=1}^{k_j-1} s_{j,l}^{\alpha_{j,l} + \frac{1}{2}p_{j,l}} \dif s_{j,l}
\end{align*}	
In particular, the Toeplitz operator with this latter symbol $\psi$ is independent of the weight $m$. 

As a consequence of the previous discussion we have the following result.

\begin{corollary}\label{cor:Toeplitz-commute}
	Let $\psi$ be a symbol as in Corollary~\ref{cor:Toeplitz_qrph_pj0}. Then, we have
	\[
		\gamma_\psi = \gamma_{a,k,m} \prod_{j=1}^\ell \gamma_{b_j,k,p_{(j)}}.
	\]
	Furthermore, the Toeplitz operators $T_a$, $T_{b_j t_{(j)}^{p_{(j)}}}$, for $j=1,\dots ,m$, pairwise commute and
	\[
		T_{a\prod_{j=1}^\ell b_j t_{(j)}^{p_{(j)}}}=T_a \prod_{j=1}^\ell T_{b_j t_{(j)}^{p_{(j)}}}.	
	\]
	In particular, the Toeplitz operators for these symbols generate a Banach algebra.
\end{corollary}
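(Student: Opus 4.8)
The plan is to reduce everything to the explicit action of the operators on the orthogonal monomial basis $\{z^\alpha : \alpha \in J_n(m)\}$ supplied by Theorem~\ref{thm:Toeplitz_qrph}, Corollary~\ref{cor:Toeplitz_qrph_pj0} and formula~\eqref{eq:gamma-qr}. Write $A = T_a$ and $B_j = T_{b_j t_{(j)}^{p_{(j)}}}$, and for each $j$ let $q^{(j)} \in \Z^n$ denote the multi-index that agrees with $p_{(j)}$ on the $j$-th block and vanishes on all other blocks, so that $B_j(z^\alpha) = \gamma_{b_j,k,p_{(j)}}(\alpha)\, z^{\alpha + q^{(j)}}$ when $\alpha + q^{(j)} \in J_n(m)$ and $0$ otherwise, while $A(z^\alpha) = \gamma_{a,k,m}(\alpha)\, z^\alpha$ acts diagonally. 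Note that $\sum_{j=1}^\ell q^{(j)} = p$ and $|q^{(j)}| = |p_{(j)}| = 0$. The identity $\gamma_\psi = \gamma_{a,k,m}\prod_{j=1}^\ell \gamma_{b_j,k,p_{(j)}}$ is then immediate: the right-hand side of the formula in Corollary~\ref{cor:Toeplitz_qrph_pj0} factors term by term, its radial integral in $a$ together with the combinatorial prefactor reproducing $\gamma_{a,k,m}(\alpha)$ as in~\eqref{eq:gamma-qr}, and each simplex integral in $b_j$ reproducing the displayed expression for $\gamma_{b_j,k,p_{(j)}}(\alpha)$.

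The commutativity rests on two structural observations about these coefficients. First, inspecting~\eqref{eq:gamma-qr} shows that $\gamma_{a,k,m}(\alpha)$ depends on $\alpha$ only through the block sums $|\alpha_{(1)}|,\dots,|\alpha_{(\ell)}|$; since $|q^{(j)}| = 0$ and $q^{(j)}$ is supported on block $j$, each shift $\alpha \mapsto \alpha + q^{(j)}$ leaves every block sum, hence $\gamma_{a,k,m}$, unchanged. Second, the displayed formula for $\gamma_{b_j,k,p_{(j)}}(\alpha)$ involves only the $j$-th block coordinates $\alpha_{(j)}$, so for $i \neq j$ the shift $q^{(i)}$, being supported off block $j$, leaves $\gamma_{b_j,k,p_{(j)}}$ invariant. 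Combining these, a direct evaluation on $z^\alpha$ gives $A B_j (z^\alpha) = B_j A (z^\alpha) = \gamma_{a,k,m}(\alpha)\gamma_{b_j,k,p_{(j)}}(\alpha)\, z^{\alpha + q^{(j)}}$, and likewise $B_i B_j (z^\alpha) = B_j B_i (z^\alpha) = \gamma_{b_i,k,p_{(i)}}(\alpha)\gamma_{b_j,k,p_{(j)}}(\alpha)\, z^{\alpha + q^{(i)} + q^{(j)}}$; hence the operators pairwise commute.

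For the product formula I would apply the factors $B_j$ in succession to $z^\alpha$: because $q^{(i)}$ is supported off block $j$ for $i \neq j$, each $\gamma_{b_j,k,p_{(j)}}$ may be evaluated at the original $\alpha$, yielding $\prod_{j=1}^\ell B_j(z^\alpha) = \big(\prod_{j=1}^\ell \gamma_{b_j,k,p_{(j)}}(\alpha)\big) z^{\alpha + p}$, and a final application of $A$ contributes $\gamma_{a,k,m}(\alpha + p) = \gamma_{a,k,m}(\alpha)$; by the factorization above this equals $T_\psi(z^\alpha)$. The point requiring the most care, and the main obstacle, is the bookkeeping of when these expressions vanish, that is, matching the conditions $\alpha + (\cdot) \in J_n(m)$ on the two sides. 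Here the hypotheses $|p_{(j)}| = 0$ are decisive: they force $|\alpha + q^{(j)}| = |\alpha| \le m$, so no degree constraint is ever violated, and since the blocks are disjoint the only surviving requirements, namely the nonnegativity of $\alpha_{j,l} + p_{j,l}$ within each block, decouple across $j$. Consequently $\prod_{j=1}^\ell B_j(z^\alpha)$ and $T_\psi(z^\alpha)$ vanish on exactly the same indices $\alpha$, independently of the order of composition, and the product formula follows. Finally, since the generators $A, B_1, \dots, B_\ell$ pairwise commute, the (norm-closed) subalgebra of $\mathcal{B}(\mA^2_m(\PnC))$ they generate is a commutative Banach algebra.
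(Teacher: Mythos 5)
Your proposal is correct and follows essentially the same route as the paper, which states the corollary as an immediate consequence of the factorization of $\gamma_{\psi,m}$ in Corollary~\ref{cor:Toeplitz_qrph_pj0} into $\gamma_{a,k,m}$ times the block factors $\gamma_{b_j,k,p_{(j)}}$, together with the diagonal-plus-block-shift action on monomials from Theorem~\ref{thm:Toeplitz_qrph}. Your write-up merely makes explicit what the paper leaves implicit (invariance of the coefficients under the shifts $q^{(j)}$ because $|p_{(j)}|=0$ and the blocks are disjoint, and the matching of the vanishing conditions), which is exactly the intended argument.
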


\section{Single-sphere pseudo-homogeneous symbols}\label{sec:singlesphere_symbols}
For a given $z \in \C^n$ we consider the decomposition
\[
	z_u = \rho_u t_u
\]
where $\rho_u = |z_u| \in \R_+$ and $t_u \in \T$, for every $u = 1, \dots, n$. We also introduce the coordinates
\[
	\sigma_u = \frac{\rho_u}{r}
\]
for every $u = 1, \dots, n$, where $r = |z|$. Hence, we have $\sigma = (\sigma_1, \dots, \sigma_n) \in S_+^{n-1}$. And with this notation we can write
\[
	z = r \sigma t = r(\sigma_1 t_1, \dots, \sigma_n t_n).
\]

\begin{defn}\label{def:single-pseudohomogeneous}
	For a partition $k \in \N^\ell$ of $n$, a symbol $\psi \in L^\infty(\PnC)$ is called single-sphere $k$-pseudo-homogeneous if it has the form
	\[
		\psi([w]) = b(\sigma) \prod_{j=1}^\ell t_{(j)}^{p_{(j)}}
	\]
	where $\sigma \in S_+^{n-1}$ and $t \in \T^n$ are computed for $z = \frac{(w_1, \dots, w_n)}{w_0}$, the function $b \in L^\infty(S_+^{n-1})$, and $p \in \Z^n$ satisfies $|p_{(j)}| = 0$ for every $j = 1, \dots, n$.
\end{defn}

We observe that with this definition the part $b(\sigma)$ of the symbols is well defined since it is given in terms of homogeneous coordinates. And as before, the condition on $p$ ensures that the part depending on $t$ is well defined as well. On the other hand, note that $\psi$ is given on the complement of a hyperplane of $\PnC$ and so it is well defined as an $L^\infty$ function. 

The difference between the single-sphere $k$-pseudo-homogeneous symbols and the $k$-pseudo-homogeneous symbols from Definition~\ref{def:pseudo-homogeneous-symbol} is that for the latter we partition the coordinates into several spheres according to the partition $k$. In particular, we can refer to the symbols from Definition~\ref{def:pseudo-homogeneous-symbol} as multi-sphere $k$-pseudo-homogeneous symbols.

As before, a particularly interesting case is the one for which we have
\[
	b(\sigma) = \prod_{j=1}^\ell b_j(\sigma_{(j)}),
\]
where $b_j \in L^\infty(S_+^{n-1} \cap \R^{k_j})$, where $\R^{k_j}$ is the subspace of $\R^n$ corresponding to the coordinates $(k_{j-1}+1, \dots, k_{j-1}+k_j)$. For simplicity, we will denote with $\chi_j$ this latter set.

We now fix a partition $k$ as above and we will now consider a symbol of the form
\begin{equation}\label{eq:j-single-sphere-pseudo-symbol}
	\psi_j(z) = b_j(\sigma_{(j)}) t_{(j)}^{p_j},
\end{equation}
where $p_j \in \Z^{k_j}$. As before, we can assume our symbol defined for $z \in \C^n$. Also note that for $\psi_j$ the multi-index $p_j$ needs to be given only on the coordinates in the set $\chi_j$, but it still has to satisfy $|p_j| = 0$. Since we need to consider multi-indexes for all the coordinates of $\C^n$, we introduce the following notation. Given $p_j \in \Z^{k_j}$, we will denote by $\widetilde{p}_j \in \Z^n$ the multi-index such that
\[
	(\widetilde{p}_j)_{(v)} =
	\begin{cases}
		p_j,  &\text{ if } v = j \\
		0, &\text{ if } v \not= j
	\end{cases}.
\]
In other words, $\widetilde{p}_j$ vanishes outside the coordinates in $\chi_j$ and coincides with $p_j$ in these coordinates.

\begin{thm}\label{thm:Toeplitz-j-singlesphere}
	Let $\psi_j \in L^\infty(\C^n)$ be a single-sphere $k$-pseudo-homogeneous symbol whose expression is given as in \eqref{eq:j-single-sphere-pseudo-symbol}. Then, the Toeplitz operator $T_{\psi_j}$ acting on the weighted Bergman space $\mA_m^2(\PnC) \simeq \PolymCn$ satisfies
	\[
		T_{\psi_j}(z^\alpha) =
		\begin{cases}
			\gamma_{\psi_j, m}(\alpha) z^{\alpha + \widetilde{p}_j}, &\text{ if }  
						\alpha + \widetilde{p}_j \in J_n(m) \\
			0, &\text{ if }  \alpha + \widetilde{p}_j \notin J_n(m)
		\end{cases},
	\]
	for every $\alpha \in J_n(m)$, where
	\begin{align*}
		&\gamma_{\psi_j, m}(\alpha) = \\
		&= \frac{(|\alpha|+n-1)!}{(|\alpha|-|\alpha_{(j)}|+n-k_j-1)!\prod_{u\in\chi_j}(\alpha_u+p_u)!} \\
		&\quad \times 
			\int_{\Delta_{k_j}} b_j(\sqrt{\sigma_{(j)}}) 
				(1 - (\sigma_{(j),1} + \dots + \sigma_{(j),k_j}))^{|\alpha|-|\alpha_{(j)}|+n-k_j -1} \\
		&\qquad \times
				\prod_{u \in \chi_j} \sigma_u^{\alpha_u + \frac{1}{2}(\widetilde{p}_j)_u} \dif \sigma_u.
	\end{align*}
\end{thm}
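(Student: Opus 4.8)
The plan is to mimic the proof of Theorem~\ref{thm:Toeplitz_qrph}, computing the matrix coefficients $\langle T_{\psi_j}z^\alpha, z^\beta\rangle = \langle \psi_j z^\alpha, z^\beta\rangle$ against the monomial basis and reading off $\gamma_{\psi_j,m}(\alpha)$ at the end. First I would expand the Bergman inner product as an integral over $\C^n$ and pass to the coordinates $z_u = \rho_u t_u$. Writing $z^\alpha\overline z^\beta = \prod_u \rho_u^{\alpha_u+\beta_u}t_u^{\alpha_u-\beta_u}$ and $\psi_j = b_j(\sigma_{(j)})\prod_u t_u^{(\widetilde p_j)_u}$, the angular part factors as $\prod_{u=1}^n\int_\T t_u^{\alpha_u+(\widetilde p_j)_u-\beta_u}\frac{\dif t_u}{it_u}$, which vanishes unless $\beta=\alpha+\widetilde p_j$ and otherwise equals $(2\pi)^n$. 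This already produces the shift $z^\alpha\mapsto z^{\alpha+\widetilde p_j}$ and, by orthogonality against the basis \eqref{eq:basis-Bergman}, the vanishing clause when $\alpha+\widetilde p_j\notin J_n(m)$.

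The decisive difference from the multi-sphere case is that here there is a \emph{single} radius $r=|z|$ rather than one radius per block. So after setting $\beta=\alpha+\widetilde p_j$ I would introduce global spherical coordinates $\rho=r\sigma$ with $r\in\R_+$ and $\sigma\in S_+^{n-1}$, using $|\widetilde p_j|=|p_j|=0$ to collect the total $r$-power. The radial factor then separates completely and, after the substitution $s=r^2$, yields a Beta integral equal to $\tfrac12\,\frac{(|\alpha|+n-1)!\,(m-|\alpha|)!}{(n+m)!}$; this accounts for the $(|\alpha|+n-1)!$ in the numerator and, together with $\langle z^{\alpha+\widetilde p_j},z^{\alpha+\widetilde p_j}\rangle_m=\frac{(m-|\alpha|)!\,(\alpha+\widetilde p_j)!}{m!}$, for the cancellation of all $m$-dependent factors, confirming that $\gamma_{\psi_j,m}$ does not depend on $m$.

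The heart of the argument, and the step I expect to be the main obstacle, is the reduction of the remaining spherical integral
\[
	\int_{S_+^{n-1}} b_j(\sigma_{(j)})\prod_{u=1}^n \sigma_u^{2\alpha_u+(\widetilde p_j)_u+1}\,\dif S
\]
to an integral over the simplex $\Delta_{k_j}$. Since $b_j$ depends only on the block-$j$ coordinates, I would disintegrate $S_+^{n-1}\subset\R^{k_j}\times\R^{n-k_j}$ by writing $\sigma=(\xi,\zeta)$ with $\xi=\sigma_{(j)}$ ranging over $\bfB_+^{k_j}$ and $\zeta=\sqrt{1-|\xi|^2}\,\eta$, $\eta\in S_+^{n-k_j-1}$, for which
\[
	\dif S = (1-|\xi|^2)^{\frac{n-k_j}{2}-1}\,\dif\xi\,\dif S'(\eta).
\]
The complementary coordinates carry exponent $2\alpha_u+1$ and do not meet $b_j$, so the inner integral over $\eta$ is evaluated by the Dirichlet-type identity $\int_{S_+^{N-1}}\prod_u\eta_u^{2a_u-1}\,\dif S'=\frac{1}{2^{N-1}}\frac{\prod_u\Gamma(a_u)}{\Gamma(\sum_u a_u)}$ with $a_u=\alpha_u+1$; this is exactly what produces the factor $(1-\sum_{u\in\chi_j}\sigma_u)^{|\alpha|-|\alpha_{(j)}|+n-k_j-1}$, the denominator $(|\alpha|-|\alpha_{(j)}|+n-k_j-1)!$, and an auxiliary $\prod_{u\notin\chi_j}\alpha_u!$ that will cancel against the norm $\langle z^{\alpha+\widetilde p_j},z^{\alpha+\widetilde p_j}\rangle_m$. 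Finally, the substitution $\sigma_u\mapsto\sigma_u^2$ on the block-$j$ coordinates carries $\bfB_+^{k_j}$ to $\Delta_{k_j}$ and turns $b_j(\xi)$ into $b_j(\sqrt{\sigma_{(j)}})$ and the monomial into $\prod_{u\in\chi_j}\sigma_u^{\alpha_u+\frac12(\widetilde p_j)_u}$, matching the stated integrand. Assembling the radial factor, the disintegrated spherical factor and dividing by $\langle z^{\alpha+\widetilde p_j},z^{\alpha+\widetilde p_j}\rangle_m$, I would check that all powers of $2$ and all $m$-dependent factorials cancel, leaving precisely $\gamma_{\psi_j,m}(\alpha)$.
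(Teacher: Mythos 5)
Your proposal is correct, and while it follows the paper's overall skeleton (expand $\langle \psi_j z^\alpha, z^\beta\rangle$, factor out the angular integrals to force $\beta = \alpha + \widetilde{p}_j$, pass to a \emph{single} radius $r$ with $\sigma \in S_+^{n-1}$, evaluate the radial Beta integral to get $\tfrac12\frac{(|\alpha|+n-1)!(m-|\alpha|)!}{(n+m)!}$, and divide by $\|z^{\alpha+\widetilde{p}_j}\|_m^2$), you execute the decisive spherical-to-simplex reduction by a genuinely different decomposition. The paper first assumes, without loss of generality, $n \notin \chi_j$, parametrizes $S_+^{n-1}$ as a graph over $\bfB_+^{n-1}$, squares \emph{all} $n-1$ coordinates to land in $\Delta_{n-1}$, and only then factors the simplex via the change of variables $\sigma_u = \tau_u$ for $u \in \chi_j$, $\sigma_u = \bigl(1 - \sum_{v\in\chi_j}\tau_v\bigr)\tau_u$ for $u \notin \chi_j$, evaluating the complementary piece as a Dirichlet integral over $\Delta_{n-k_j-1}$, which yields the factor $\prod_{u\notin\chi_j}\alpha_u! \big/ (|\alpha|-|\alpha_{(j)}|+n-k_j-1)!$. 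You instead disintegrate the spherical measure at the level of the sphere itself, $\sigma = (\xi, \sqrt{1-|\xi|^2}\,\eta)$ with $\xi \in \bfB_+^{k_j}$, $\eta \in S_+^{n-k_j-1}$, kill the complementary variables by the spherical Dirichlet identity, and square only the block-$j$ coordinates at the end; the bookkeeping matches exactly ($2^{n-1}\cdot 2^{-(n-k_j-1)}\cdot 2^{-k_j}=1$, and the exponent $|\alpha|-|\alpha_{(j)}|+n-k_j-1$ arises from the Jacobian factor $(1-|\xi|^2)^{\frac{n-k_j}{2}-1}$ combined with the scaled monomials $(1-|\xi|^2)^{|\alpha|-|\alpha_{(j)}|+\frac{n-k_j}{2}}$ — note this factor comes from the slicing, not from the Dirichlet identity itself, a harmless slip in your attribution). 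What each route buys: yours localizes immediately to block $j$, needs no normalization $n \notin \chi_j$, and is arguably more conceptual, at the cost of invoking the disintegration formula for the spherical measure; the paper's stays entirely within elementary graph parametrizations and explicit changes of variables in the simplex.
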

\begin{proof}
	As before we proceed to calculate as follows.
	\begin{align*}
		&\langle T_{\psi_j} z^\alpha, z^\beta \rangle_m = \langle \psi_j z^\alpha, z^\beta \rangle_m \\
		&= \frac{(n+m)!}{\pi^n m!}\int_{\C^n} 
			\frac{b_j(s_{(j)})t_{(j)}^{p_j} z^\alpha \overline{z}^\beta \dif V(z)}{(1+|z_1|^2+\dots + |z_n|^2)^{n+m+1}} \\
		&= \frac{(n+m)!}{\pi^n m!}
				\int_{\R^n_+} \frac{b_j(\sigma_{(j)})  \prod_{u=1}^n \rho_u^{\alpha_u+\beta_u+1}}{(1+\rho_1^2+\dots + \rho_n^2)^{n+m+1}} \dif\rho \\
		&\quad \times
				\prod_{u=1}^n \int_\T t_u^{\alpha_u + (\widetilde{p}_j)_u - \beta_u}\frac{dt_u}{it_u} \\
\intertext{The last product of integrals is nonzero precisely when $\beta = \alpha + \widetilde{p}_j$, in which case its value is $(2\pi)^n$. So we now assume that $\beta = \alpha + \widetilde{p}_j$}	
		&= \frac{2^n(n+m)!}{m!} \int_{\R^n_+} 
			\frac{b_j(\sigma_{(j)})  \prod_{u=1}^n \rho_u^{2\alpha_u+(\widetilde{p}_j)_u+1}}{(1 + \rho_1^2+ \dots + \rho_n^2)^{n+m+1}} \dif\rho \\
\intertext{We now replace $\rho_u = r\sigma_u$ for every $u = 1, \dots, n$ and apply spherical coordinates in $\R_+^n$ to obtain the following where $\dif \sigma$ is the volume element of $S_+^{n-1}$}
		&= \frac{2^n (n+m)!}{m!} \int_{\R_+ \times S_+^{n-1}} 
				\frac{b_j(\sigma_{(j)}) 
					\prod_{u=1}^n (r \sigma_u)^{2\alpha_u + (\widetilde{p}_j)_u + 1}}{(1 + r^2)^{n+m+1}}
					r^{n-1} \dif r \dif \sigma \\
		&= \frac{2^n (n+m)!}{m!} 
			\int_{\R_+} \frac{r^{2|\alpha| + 2n - 1}}{(1 + r^2)^{n+m+1}}\dif r 
			\int_{S_+^{n-1}} b_j(\sigma_{(j)}) 
				\prod_{u=1}^n \sigma_u^{2\alpha_u + (\widetilde{p}_j)_u + 1} \dif \sigma \\
		&= \frac{2^{n-1} (n+m)!}{m!} 
			\int_{\R_+} \frac{r^{|\alpha| + n - 1}}{(1 + r)^{n+m+1}}\dif r 
			\int_{S_+^{n-1}} b_j(\sigma_{(j)}) 
				\prod_{u=1}^n \sigma_u^{2\alpha_u + (\widetilde{p}_j)_u + 1} \dif \sigma \\
		&= \frac{(|\alpha|+n-1)!(m-|\alpha|)!}{m!}
			2^{n-1}\int_{S_+^{n-1}} b_j(\sigma_{(j)}) 
				\prod_{u=1}^n \sigma_u^{2\alpha_u + (\widetilde{p}_j)_u + 1} \dif \sigma \\
\intertext{Now we parametrize $S_+^{n-1}$ by $n-1$ coordinates as before. To simplify our computations we assume that $n \notin \chi_j$ and so that $(\widetilde{p}_j)_n = 0$}
		&= \frac{(|\alpha|+n-1)!(m-|\alpha|)!}{m!} \\
		&\quad \times
			2^{n-1}\int_{\bfB_+^{n-1}} b_j(\sigma_{(j)}) 
				\prod_{u=1}^{n-1} \sigma_u^{2\alpha_u + (\widetilde{p}_j)_u + 1} 
					(1 - (\sigma_1^2 + \dots + \sigma_{n-1}^2))^{\alpha_n} \prod_{u=1}^n \dif \sigma_u \\
		&= \frac{(|\alpha|+n-1)!(m-|\alpha|)!}{m!} \\
		&\quad \times
				\int_{\Delta_{n-1}} b_j(\sqrt{\sigma_{(j)}}) 
					\prod_{u=1}^{n-1} \sigma_u^{\alpha_u + \frac{1}{2}(\widetilde{p}_j)_u} 
					(1 - (\sigma_1 + \dots + \sigma_{n-1}))^{\alpha_n} \prod_{u=1}^n \dif \sigma_u \\
\intertext{We now apply the change of coordinates $\sigma_u = \tau_u$ for $u \in \chi_j$ and $\sigma_u = (1 - (\tau_{(j),1} + \dots + \tau_{(j), k_j}))\tau_u$ for $u \notin \chi_j$}
		&= \frac{(|\alpha|+n-1)!(m-|\alpha|)!}{m!} \\
		&\quad \times
			\int_{\Delta_{k_j}} b_j(\sqrt{\sigma_{(j)}}) 
				(1 - (\sigma_{(j),1} + \dots + \sigma_{(j),k_j}))^{|\alpha|-|\alpha_{(j)}|+n-k_j -1}
				\prod_{u \in \chi_j} \sigma_u^{\alpha_u + \frac{1}{2}(\widetilde{p}_j)_u} \dif \sigma_u \\
		&\quad \times
			\int_{\Delta_{n-k_j-1}} \bigg(1 - \sum_{u \notin \chi_j \cup \{n\}} \tau_u\bigg)^{\alpha_n}
					\prod_{u \notin \chi_j \cup \{n\}} \tau_u^{\alpha_u} \dif \tau_u \\
		&= \frac{(|\alpha|+n-1)!(m-|\alpha|)!}{m!} 		
			\frac{\prod_{u\notin\chi_j}\alpha_u!}{(|\alpha|-|\alpha_{(j)}|+n-k_j-1)!} \\
		&\quad \times
			\int_{\Delta_{k_j}} b_j(\sqrt{\sigma_{(j)}}) 
				(1 - (\sigma_{(j),1} + \dots + \sigma_{(j),k_j}))^{|\alpha|-|\alpha_{(j)}|+n-k_j -1} \\
		&\qquad \times		
				\prod_{u \in \chi_j} \sigma_u^{\alpha_u + \frac{1}{2}(\widetilde{p}_j)_u} \dif \sigma_u.		
	\end{align*}
	Since we can write
	\[
		\langle  z^{\alpha+\tilde{p}_{(j)}},z^{\alpha+\tilde{p}_{(j)}} \rangle_m
		= \frac{(m-|\alpha|)!\prod_{u\in\chi_j}(\alpha_u+p_u)!\prod_{u\notin\chi_j}\alpha_u!}{m!}
	\]
\end{proof}

Note that the action of the operator $T_{\psi_j}$ does not depend on the parameter $m$. From the previous results we also obtain the following.

\begin{corollary}\label{cor:quasi-radial-single-sphere-j-pseudo-homogeneous}
	For a partition $k \in \N^\ell$ of $n$, let $a \in L^\infty(\PnC)$ be $k$-quasi-radial symbol and for every $j = 1, \dots, \ell$ let $\psi_j \in L^\infty(\PnC)$ be a symbol of the form \eqref{eq:j-single-sphere-pseudo-symbol}. Then the Toeplitz operators $T_a$, $T_{\psi_j}$ pairwise commute on every weighted Bergman space $\mA^2_m(\PnC)$.
\end{corollary}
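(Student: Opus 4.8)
The plan is to work entirely in the orthogonal basis of monomials $\{z^\alpha : \alpha \in J_n(m)\}$ and to exploit the explicit descriptions of both kinds of operators already obtained. By \eqref{eq:gamma-qr} the operator $T_a$ is diagonal, $T_a(z^\alpha) = \gamma_{a,k,m}(\alpha)\, z^\alpha$, while by Theorem~\ref{thm:Toeplitz-j-singlesphere} each $T_{\psi_j}$ is a weighted shift, $T_{\psi_j}(z^\alpha) = \gamma_{\psi_j,m}(\alpha)\, z^{\alpha + \widetilde{p}_j}$ when $\alpha + \widetilde{p}_j \in J_n(m)$ and $0$ otherwise. Since these monomials form an orthogonal basis of $\mA^2_m(\PnC)$, it then suffices to verify that the two orderings of any pair of operators from $a, \psi_1, \dots, \psi_\ell$ agree on each $z^\alpha$.

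The heart of the argument will be two invariance observations, both consequences of the conditions $|p_j| = 0$ together with the mutual disjointness of the index blocks $\chi_1, \dots, \chi_\ell$. First, I would note that formula \eqref{eq:gamma-qr} exhibits $\gamma_{a,k,m}(\alpha)$ as a function of $\alpha$ through the block sums $|\alpha_{(1)}|, \dots, |\alpha_{(\ell)}|$ alone; since $\widetilde{p}_j$ is supported in $\chi_j$ and satisfies $|p_j| = 0$, adding it leaves every block sum unchanged, so $\gamma_{a,k,m}(\alpha + \widetilde{p}_j) = \gamma_{a,k,m}(\alpha)$. Second, the formula of Theorem~\ref{thm:Toeplitz-j-singlesphere} exhibits $\gamma_{\psi_j,m}(\alpha)$ as a function of $\alpha$ through $|\alpha|$ and the coordinates $\alpha_u$ with $u \in \chi_j$ only; for $i \neq j$ the shift $\widetilde{p}_i$ lives in the disjoint block $\chi_i$ and has $|p_i| = 0$, so it changes neither $|\alpha|$ nor the $\chi_j$-coordinates, whence $\gamma_{\psi_j,m}(\alpha + \widetilde{p}_i) = \gamma_{\psi_j,m}(\alpha)$.

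With these two facts available, I would carry out the commutation checks directly on the range where no vanishing occurs. For $T_a$ and $T_{\psi_j}$, both $T_a T_{\psi_j}(z^\alpha)$ and $T_{\psi_j} T_a(z^\alpha)$ return the monomial $z^{\alpha + \widetilde{p}_j}$ with coefficients $\gamma_{\psi_j,m}(\alpha)\,\gamma_{a,k,m}(\alpha + \widetilde{p}_j)$ and $\gamma_{a,k,m}(\alpha)\,\gamma_{\psi_j,m}(\alpha)$ respectively, which coincide by the first invariance. For $T_{\psi_i}$ and $T_{\psi_j}$ with $i \neq j$, both orderings return $z^{\alpha + \widetilde{p}_i + \widetilde{p}_j}$ with coefficients $\gamma_{\psi_j,m}(\alpha)\,\gamma_{\psi_i,m}(\alpha + \widetilde{p}_j)$ and $\gamma_{\psi_i,m}(\alpha)\,\gamma_{\psi_j,m}(\alpha + \widetilde{p}_i)$, and the second invariance collapses both to $\gamma_{\psi_i,m}(\alpha)\,\gamma_{\psi_j,m}(\alpha)$.

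The step I expect to demand the most care is the degenerate case: checking that the two orderings vanish on exactly the same multi-indices $\alpha$. For $T_a$ and $T_{\psi_j}$ this is immediate because $T_a$, being diagonal, cannot affect whether $\alpha + \widetilde{p}_j$ lies in $J_n(m)$. For the pair $(\psi_i, \psi_j)$ I would argue that, since $|p_i| = |p_j| = 0$, each shift preserves total degree, so membership of $\alpha + \widetilde{p}_i$ (resp. $\alpha + \widetilde{p}_j$) in $J_n(m)$ is equivalent to nonnegativity of the shifted exponents on the block $\chi_i$ (resp. $\chi_j$) alone; the disjointness of $\chi_i$ and $\chi_j$ makes these two conditions independent, so $T_{\psi_i}T_{\psi_j}(z^\alpha)$ and $T_{\psi_j}T_{\psi_i}(z^\alpha)$ are simultaneously nonzero precisely when both $\alpha + \widetilde{p}_i$ and $\alpha + \widetilde{p}_j$ belong to $J_n(m)$, and simultaneously zero otherwise. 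This bookkeeping, rather than any genuine analytic difficulty, is the only real obstacle, and once it is settled the stated pairwise commutativity holds on all of $\mA^2_m(\PnC)$.
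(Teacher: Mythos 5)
Your proof is correct and is exactly the argument the paper intends: the corollary is stated there without proof, as an immediate consequence of the diagonal action of $T_a$ in \eqref{eq:gamma-qr} and the weighted-shift action of $T_{\psi_j}$ in Theorem~\ref{thm:Toeplitz-j-singlesphere}, and your two invariance observations (block sums unchanged under adding $\widetilde{p}_j$ with $|p_j|=0$; disjointness of the blocks $\chi_i$) plus the vanishing bookkeeping supply precisely the verification the paper leaves implicit. No deviation from the paper's route, and no gaps.
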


\begin{remark}\label{rmk:TabnotTaTb}
	It is easy to check that $T_{a\psi_j} \not= T_a T_{\psi_j}$ for suitably chosen symbols as above. This should be compared with Corollary~\ref{cor:Toeplitz-commute}.
\end{remark}

\section{Geometric description of symbols through moment maps}\label{sec:momentmaps}
After introducing some of the basic notions associated with Hamiltonian actions we will use some of their basic properties. We refer to \cite{Ortega-Ratiu} and \cite{Audin} for further details and proofs of known facts.

We will consider a compact group $G$ acting on a manifold $M$, which in our case will be compact as well. For simplicity and without loss of generality we will assume that the $G$-action is free: the stabilizer of every point is trivial. We also note that any action of a compact Lie group is necessarily proper.

For every $X \in \fg$ we will denote by $X^\sharp$ the vector field on $M$ whose flow is $\exp(tX)$. More precisely, we have
\[
	X^\sharp_x = \od{}{t}\sVert[2]_{t=0}\exp(tX)x.
\]
We further assume that the manifold carries a symplectic form $\omega$ and that the $G$-action preserves $\omega$. In particular, the $1$-form given by
\[
	\iota_{X^\sharp} \omega = \omega(X^\sharp, \cdot)
\]
is closed for every $X \in \fg$. The symplectic $G$-action on $(M,\omega)$ is called Hamiltonian if there exists a smooth map
\[
	\mu : M \rightarrow \fg^*,
\]
which is $G$-equivariant for the coadjoint $G$-action on $\fg^*$ (where $\fg^*$ denotes the dual vector space of the Lie algebra $\fg$) and if we have
\[
	\dif \mu_X = \iota_{X^\sharp} \omega
\]
for every $X \in \fg$. Here, the function $\mu_X \in C^\infty(M)$ is defined by  
\begin{align*}
	\mu_X : M &\rightarrow \R  \\
	\mu_X(x) &= \langle \mu(x), X \rangle.
\end{align*}
The map $\mu$ is called the moment map of the Hamiltonian $G$-action on $M$.

Marsden-Weinstein Theorem allows to perform what is known as the symplectic point reduction of a Hamiltonian action. More precisely, for the above setup, if $v \in \fg^*$ is a regular point of the moment map $\mu$, then the quotient
\[
	M_0 = \mu^{-1}(v)/G
\]
admits a manifold structure such that the quotient map
\[
	\pi : \mu^{-1}(v) \rightarrow M_0
\]
defines a principal bundle with structure group $G$. Furthermore, there exists a unique symplectic form $\omega_0$ on $M_0$ that satisfies
\[
	\pi^*(\omega_0) = \omega|_{\mu^{-1}(v)}.
\]
The symplectic manifold $(M_0, \omega_0)$ is called a symplectic point reduction of $(M,\omega)$.

This provides a well known alternative definition of the projective space that we now describe (see \cite{Audin} for further details). On $\C^{n+1}$ we consider the canonical symplectic form
\[
	\omega' = \frac{1}{2}\mathrm{Im}\bigg(\sum_{j=0}^n \dif w_j \wedge \dif \overline{w}_j\bigg),
\]
and the symplectic action
\begin{align*}
	\T \times \C^{n+1} &\rightarrow \C^{n+1} \\
		(t,w) &\mapsto (tw_0, \dots, tw_n).
\end{align*}
This $\T$-action is Hamiltonian with moment map
\begin{align*}
	\mu : \C^{n+1} &\rightarrow \R \\
		w &\mapsto \frac{1}{2}|w|^2.
\end{align*}
We recall that $\R$ is the Lie algebra of $\T$, and since the latter is Abelian the adjoint and coadjoint actions are trivial. The symplectic point reduction in this case, reduces to the well known fact that
\[
	\mu^{-1}\bigg(\frac{1}{2}\bigg) = S^{2n-1}/\T \simeq \PnC.
\]
It is also well known that the corresponding symplectic form is precisely the one introduced in Section~\ref{sec:geoprel}. 

Our second fundamental example is given by the action
\begin{equation}\label{eq:Tn-action}
	\begin{aligned}
		\T^n \times \PnC &\rightarrow \PnC \\
			(t, [w]) &\mapsto [tw] = [t_0 w_0, \dots, t_n w_n],
	\end{aligned}
\end{equation}
where we have identified
\[
	\T^n \simeq \{(t_0, \dots, t_n) \mid t_j \in \T \text{ for all } j = 0, \dots, n, \text{ and }
			t_0 \dots t_n = 1 \}.
\]
A straightforward computation shows that for the natural embedding $\C^n \hookrightarrow \PnC$ used before the subset $\C^n$ is $\T^n$-invariant with restricted action given by
\[
	t\cdot z = (t_1 z_1, \dots, t_n z_n),
\]
where $\T^n$ is now realized as the usual product of $n$ circles $\T$. This $\T^n$-action is Hamiltonian. In fact, if we realize the Lie algebra of $\T^n$ as
\[
	\widehat{\R}^n = \{ x \in \R^{n+1} \mid x_0 + \dots + x_n = \frac{1}{2} \},
\]
then the moment map of the $\T^n$-action on $\PnC$ is given by
\begin{align*}
	\mu_n : \PnC &\rightarrow \widehat{\R}^n  \\
		\mu_n([w]) &= \frac{1}{2|w|^2} (|w_0|^2, \dots, |w_n|^2).
\end{align*}
Note that $\widehat{\R}^n$ is an $n$-dimensional affine space, but it carries a natural structure of vector space by choosing the point
\[
	\frac{1}{2(n+1)}(1, \dots, 1)
\] 
as its origin.

This last example can be generalized to the notion of toric manifolds (see \cite{Audin} and \cite{Ortega-Ratiu}).

\begin{defn}
	A toric manifold is a compact symplectic manifold $M$ with dimension $2n$ together with an effective Hamiltonian $\T^n$-action. 
\end{defn}

A fundamental result due to Delzant establishes a one to one correspondence between toric manifolds and a certain family of polytopes.

\begin{thm}[Delzant]
	If $M$ is an $2n$-dimensional toric manifold with moment map $\mu$, then the image $\mu(M)$ is a polytope that satisfies the following properties.
	\begin{description}
		\item[Simple] There are $n$ edges meeting at each vertex.
		\item[Rational] The edges meeting at a vertex $p$ are of the form $p + tv_j$ for $t \in \R_+$, where $v_1, \dots, v_n \in \Z^n$.
		\item[Smooth] The vectors $v_1, \dots, v_n$ can be choose to be a basis for $\Z^n$. 
	\end{description}
	A convex polytope satisfying theses properties is called a Delzant polytope. Furthermore, there is a one to one correspondence
	\begin{align*}
		\{\text{toric manifolds} \} &\longrightarrow \{\text{Delzant polytopes}\} \\
			(M, \omega, \mu) &\longmapsto \mu(M).
	\end{align*}
\end{thm}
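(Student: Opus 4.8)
The plan is to establish the two assertions of the theorem separately: first that the image $\mu(M)$ of the moment map is a polytope satisfying the three listed properties, and then that the assignment $(M,\omega,\mu) \mapsto \mu(M)$ is a bijection onto the set of Delzant polytopes. The first assertion is essentially local in nature once convexity is known, whereas the second splits into an existence statement (every Delzant polytope arises from a toric manifold) and a uniqueness statement (the polytope determines the toric manifold up to equivariant symplectomorphism).

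For the first part I would begin by invoking the Atiyah--Guillemin--Sternberg convexity theorem: for a Hamiltonian $\T^n$-action on a compact connected symplectic manifold, the image $\mu(M)$ is the convex hull of the images of the fixed points, hence a convex polytope. This is proved by a Morse-theoretic argument showing that each component $\mu_X$ of the moment map is a Morse--Bott function with connected level sets. To obtain the simple, rational and smooth properties I would pass to a neighborhood of each fixed point $p$ and apply the equivariant Darboux theorem: near $p$ the action is linearized on $(\C^n, \omega_{\mathrm{std}})$ with integer weight vectors $v_1, \dots, v_n \in \Z^n$, and the moment map takes the normal form $\mu(z) = \mu(p) + \frac{1}{2}\sum_j |z_j|^2 v_j$. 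The image of this local model is exactly the cone $\mu(p) + \sum_j \R_+ v_j$, which shows simultaneously that $\mu(M)$ is simple (there are $n$ weights, hence $n$ edges, at each vertex) and rational (the edges point along lattice vectors $v_j$). The effectiveness of the action forces these weights to form a $\Z$-basis of $\Z^n$, which is precisely the smoothness condition.

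For the existence half of the correspondence I would use the Delzant construction by symplectic reduction. Writing the polytope as $P = \{x \in \R^n : \langle x, u_i \rangle \leq \lambda_i, \ i = 1, \dots, d\}$ with primitive facet normals $u_i \in \Z^n$, define the surjection $\beta : \R^d \to \R^n$, $e_i \mapsto u_i$, which by the smoothness condition descends to a surjection of tori $\T^d \to \T^n$. Let $N$ be its kernel, a subtorus of $\T^d$ of dimension $d-n$. The standard $\T^d$-action on $\C^d$ with moment map $\phi(z) = \frac{1}{2}(|z_1|^2, \dots, |z_d|^2)$ restricts to a Hamiltonian $N$-action, and I would then reduce $\C^d$ by $N$ at the level determined by $(\lambda_1, \dots, \lambda_d)$. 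The work here is to check that this level is regular (so the reduced space $M_P$ is a smooth compact symplectic manifold) and that the residual $\T^d/N \simeq \T^n$-action on $M_P$ is effective and Hamiltonian with moment image exactly $P$; both follow from the combinatorics encoded in the Delzant conditions.

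Uniqueness is the statement that two toric manifolds with the same moment polytope are equivariantly symplectomorphic, and I would prove it by the usual local-to-global scheme: the local normal form shows that the two manifolds are equivariantly symplectomorphic near the preimage of each face of $P$, and an equivariant partition of unity together with a Moser-type deformation argument patches these local symplectomorphisms into a global one. The main obstacle, and the most delicate step, is the existence half --- specifically verifying that the reduced space produced by the Delzant construction is smooth and has moment image equal to the prescribed $P$; this is where the three defining conditions are used essentially, the smoothness condition guaranteeing that $N$ is a closed subtorus and that the chosen reduction level is regular.
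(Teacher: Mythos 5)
The paper offers no proof of this statement: it is imported as Delzant's classical theorem, with \cite{Audin} and \cite{Ortega-Ratiu} cited for details, so there is no internal argument to compare yours against, and your outline is precisely the standard proof found in those references --- Atiyah--Guillemin--Sternberg convexity for the polytope claim, the equivariant local normal form $\mu(z)=\mu(p)+\frac{1}{2}\sum_j |z_j|^2 v_j$ at fixed points with effectiveness forcing the isotropy weights to be a $\Z$-basis, the Delzant construction (reduction of $\C^d$ by the kernel torus $N$ of $e_i \mapsto u_i$) for existence, and local-normal-form-plus-Moser patching for uniqueness, understood up to equivariant symplectomorphism, which is the precise meaning of the stated bijection. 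Your sketch is correct, and the genuinely delicate verifications (regularity of the reduction level, freeness of the $N$-action, and the vanishing of the patching obstruction, which Delzant settles using contractibility of the convex polytope) are correctly identified even though deferred.
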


Delzant's Theorem applied to the above $\T^n$-action on $\PnC$ yields the following Delzant polytope given by the following expression and their corresponding realizations
\begin{align*}
	\mu_n(\PnC) &= \{ x \in \R^{n+1} \mid x_0 + \dots + x_n = \frac{1}{2} \} \\
			&\simeq \{ x \in \R^{n+1} \mid x_0 + \dots + x_n = 1 \} \\
			&\simeq \{ x \in \R_+^n \mid x_1 + \dots + x_n \leq 1 \},
\end{align*}
and the latter is precisely the set $\Delta_n$ introduced in Equation~\eqref{eq:BandDelta}. As a consequence, we obtain the following remark.

\begin{remark}\label{rmk:DelzantTeoplitzIntegrals}
	The expressions found in Sections~\ref{sec:qrph_symbols} and \ref{sec:singlesphere_symbols} that describe the action on monomials of the Toeplitz operators with pseudo-homogeneous symbols (both single and multi-sphere) are all given as integrals on the Delzant polytopes of projective spaces. Furthermore, the same remarks applies to the corresponding results for the unit ball: Lemmas~3.2 and 5.2 from \cite{Vasilevski-Pseudo}. These facts allow us to raise the following questions.
	\begin{itemize}
		\item Is there a relationship between the quasi-radial pseudo-homogeneous symbols and Delzant polytopes?
		\item With the introduction of the notion of Delzant polytopes, is it possible to infer the existence of further symbols whose associated Toeplitz operators can be explicitly described on monomials? 
	\end{itemize}
\end{remark}

We now proceed to answer the first question in Remark~\ref{rmk:DelzantTeoplitzIntegrals}. Hence, we fix a partition $k \in \Z^\ell$ of $n$ as before.

\subsection{Geometric description of $k$-quasi-radial symbols}\label{subsec:quasi-radial}
Let us consider the action \eqref{eq:Tn-action} restricted to the torus $\T^\ell$
\begin{equation}\label{eq:Tl-action}
	\begin{aligned}
		\T^\ell \times \PnC &\rightarrow \PnC \\
			(t,[w]) &\mapsto [t_0 w_0, t_1 w_{(1)}, \dots, t_l w_{(l)}].
	\end{aligned}
\end{equation}
As before we have made the identification 
\[
	\T^\ell \simeq \{(t_0, \dots, t_\ell) \mid t_j \in \T \text{ for all } j = 0, \dots, \ell, 
		\text{ and } t_0 \dots t_\ell = 1 \}.
\]
Since the previous action is the restriction of a Hamiltonian action it is Hamiltonian as well. Furthermore, the corresponding moment map is given by
\begin{align*}
	\mu_\ell : \PnC &\rightarrow \widehat{\R}^\ell \\
		[w] &\mapsto \frac{1}{2|w|^2} (|w_0|^2, |w_{(1)}|^2, \dots, |w_{(l)}|^2).
\end{align*}
This follows from the fact that the $\T^\ell$-action is a restriction of the action given by \eqref{eq:Tn-action}, for which the moment map is already known, and the results in Section~4.5 from \cite{Ortega-Ratiu}.

The following result describes the $k$-quasi-radial symbols as the pullback under the moment map $\mu_\ell$ of bounded functions on the polytope of such map.

\begin{thm}\label{thm:k-quasi-radial-polytope}
	A bounded symbol $a \in L^\infty(\PnC)$ is $k$-quasi-radial if and only if it belongs to $\mu_\ell^*(L^\infty(P_\ell))$, where $P_\ell = \mu_\ell(\PnC)$ the polytope associated to the moment map $\mu_\ell$. More precisely, $a$ is $k$-quasi-radial if and only if there exists $\widehat{a} \in L^\infty(P_\ell)$ such that $a = \widehat{a} \circ \mu_\ell$.
\end{thm}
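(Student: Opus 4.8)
The plan is to show that both conditions---being $k$-quasi-radial and factoring through $\mu_\ell$---amount to depending only on the block norms $|w_{(0)}|, \dots, |w_{(\ell)}|$ in a scale-invariant way, and that the polytope $P_\ell$ is a faithful parameter space for these data. The starting observation is that the components of the moment map are
\[
	\mu_\ell([w]) = \Big(\frac{|w_{(0)}|^2}{2|w|^2}, \dots, \frac{|w_{(\ell)}|^2}{2|w|^2}\Big),
\]
so a point $x = (x_0, \dots, x_\ell) \in P_\ell$, which satisfies $x_j \geq 0$ and $x_0 + \dots + x_\ell = \tfrac12$, records exactly the normalized profile via $|w_{(j)}|/|w| = \sqrt{2x_j}$. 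Thus I would introduce the continuous bijection
\[
	\Phi : P_\ell \to S_+^\ell, \qquad \Phi(x) = (\sqrt{2x_0}, \dots, \sqrt{2x_\ell}),
\]
onto the nonnegative part of the unit $\ell$-sphere in $\R^{\ell+1}$, with continuous inverse $\rho \mapsto \tfrac12(\rho_0^2, \dots, \rho_\ell^2)$; note that $\Phi \circ \mu_\ell([w]) = |w|^{-1}(|w_{(0)}|, \dots, |w_{(\ell)}|)$.

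For the inclusion $\mu_\ell^*(L^\infty(P_\ell)) \subseteq \{k\text{-quasi-radial symbols}\}$ I would simply substitute: if $a = \widehat a \circ \mu_\ell$ then
\[
	a([w]) = \widehat a\Big(\tfrac{1}{2|w|^2}(|w_{(0)}|^2, \dots, |w_{(\ell)}|^2)\Big),
\]
which is manifestly a function of $|w_{(0)}|, \dots, |w_{(\ell)}|$ alone and is unchanged under $w \mapsto \lambda w$, hence homogeneous of degree $0$; setting $\tilde a(\rho_0, \dots, \rho_\ell) = \widehat a\big(\tfrac{1}{2|\rho|^2}(\rho_0^2, \dots, \rho_\ell^2)\big)$ exhibits $a$ as $k$-quasi-radial. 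Conversely, given a $k$-quasi-radial $a$ with profile $\tilde a$ homogeneous of degree $0$, I would define $\widehat a = \tilde a \circ \Phi$ on $P_\ell$; since $\tilde a$ is degree-$0$ homogeneous, $\tilde a(|w_{(0)}|, \dots, |w_{(\ell)}|) = \tilde a\big(|w|^{-1}(|w_{(0)}|, \dots, |w_{(\ell)}|)\big) = \widehat a(\mu_\ell([w]))$, so $a = \widehat a \circ \mu_\ell$, and boundedness of $\widehat a$ follows from that of $a$ together with the continuity of $\Phi^{-1}$.

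The one point requiring care, and the main obstacle, is the passage between pointwise-defined functions and $L^\infty$ classes: since the statement quantifies over $L^\infty$, I must verify that the correspondence $a \leftrightarrow \widehat a$ respects null sets in both directions. For this I would show that the pushforward $(\mu_\ell)_*\Omega$ of the Fubini--Study volume is mutually absolutely continuous with Lebesgue measure on $P_\ell$. On the open dense set of regular values the map $\mu_\ell$ is a proper submersion whose fibers are products of spheres, so by the coarea formula---or by the Duistermaat--Heckman description of $(\mu_\ell)_*\Omega$---its pushforward has a positive density there, while the critical values lie in the faces $\{x_j = 0\}$ of $P_\ell$, a Lebesgue-null boundary stratum. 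Consequently a null set upstairs corresponds to a null set downstairs and conversely, so $\widehat a$ is well defined as an element of $L^\infty(P_\ell)$, the map $\mu_\ell^*$ is an isometry onto its image, and the two constructions above are mutual inverses. This yields the claimed identification of the $k$-quasi-radial symbols with $\mu_\ell^*(L^\infty(P_\ell))$.
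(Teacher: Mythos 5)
Your proof is correct and follows essentially the same route as the paper's: one inclusion by direct substitution, and the converse via degree-zero homogeneity of $\widetilde a$ together with the change of variables $x \mapsto (\sqrt{2x_0},\dots,\sqrt{2x_\ell})$ between $P_\ell$ and $S_+^\ell$. Your final paragraph on null sets and the pushforward $(\mu_\ell)_*\Omega$ supplies measure-theoretic rigor that the paper compresses into the phrase ``a trivial change of coordinates,'' but it is an added verification, not a different argument.
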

\begin{proof}
	Suppose that $a = \widehat{a} \circ \mu_\ell$ for some $\widehat{a} \in L^\infty(P_\ell)$. Then, we can write
	\[
		a([w]) = \widehat{a}\bigg(\frac{|w_0|^2}{2|w|^2}, \frac{|w_{(1)}|^2}{2|w|^2} \dots, 	
			\frac{|w_{(\ell)}|^2}{2|w|^2}\bigg),
	\]
	which is clearly the composition of the map $w \mapsto (|w_0|, |w_{(1)}|, \dots, |w_{(\ell)}|)$ with a homogeneous function of degree $0$. Hence, $a$ is $k$-quasi-radial by Definition~\ref{def:quasi-radial}.
	
	Conversely, suppose that $a$ is a bounded $k$-quasi-radial symbol. By definition we can write
	\[
		a([w]) = \widetilde{a}(|w_0|, |w_{(1)}|, \dots, |w_{(\ell)}|)
	\]
	for some function $\widetilde{a}$ which is homogeneous of degree $0$. In particular, we have
	\[
		a([w]) = \widetilde{a}\bigg(\frac{|w_0|}{|w|}, \frac{|w_{(1)}|}{|w|}, \dots, \frac{|w_{(\ell)}|}{|w|}\bigg),
	\]
	and then we see that $a$ belongs to $\mu_\ell^*(L^\infty(P_\ell))$ by a trivial change of coordinates.
\end{proof}

\subsection{Geometric description of symbols of the form $t^p$}\label{subsec:tp-symbols}
Besides the partition $k$ we now fix $p \in \Z^n$ such that $|p_{(j)}| = 0$ for every $j = 1, \dots, \ell$. In particular, and according to the notation from Section~\ref{sec:qrph_symbols}, the symbol
\begin{align*}
	\PnC &\rightarrow \C \\
	[w] &\mapsto t^p = \prod_{j=1}^{\ell} t_{(j)}^{p_{(j)}},
\end{align*}
is $k$-pseudo-homogeneous.

Corresponding to the $\T^\ell$-action on $\PnC$ given by \eqref{eq:Tl-action} we have an induced quotient map
\begin{align*}
	\pi_k : \PnC &\rightarrow \prod_{j =1}^{k_j} \mathbb{P}^{k_j-1}(\C) \\
		[w] &\mapsto ([w_{(1)}], \dots, [w_{(\ell)}]).	
\end{align*}

On the other hand, the condition $|p_{(j)}| = 0$ for every $j = 1, \dots, \ell$ implies that the symbol $t^p$ is $\T^\ell$-invariant. As a consequence we now have the following.

\begin{thm}\label{thm:tp-type-pik}
	Every symbol of the type $t^p$ belongs to $\pi_k^*(L^\infty(\prod_{j =1}^{k_j} \mathbb{P}^{k_j-1}(\C)))$. More precisely, we have
	\begin{multline*}
		\{ t^p \mid p \in \Z^n, \text{ such that } |p_{(j)}| = 0 \text{ for every } j = 1, \dots, \ell \} \\
			\subset
				\bigg\{ f \circ \pi_k \mid f \in L^\infty\bigg(\prod_{j =1}^{k_j} \mathbb{P}^{k_j-1}(\C)\bigg) \bigg\}.	
	\end{multline*}
	In other words, any such symbol can be considered as a function on the product of projective spaces 
	$\prod_{j =1}^{k_j} \mathbb{P}^{k_j-1}(\C)$.
\end{thm}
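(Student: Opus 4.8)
The plan is to show that each symbol $t^p$, with $|p_{(j)}| = 0$ for every $j$, is constant along the fibers of the quotient map $\pi_k$, and then to take $f$ to be the resulting descended function, checking that it lies in $L^\infty$. First I would describe the fibers of $\pi_k$ explicitly. Since $k_0 = 1$, the map $[w] \mapsto ([w_{(1)}], \dots, [w_{(\ell)}])$ discards the coordinate $w_0$ and remembers only the projective class of each block $w_{(j)}$. Hence $[w]$ and $[w']$ lie in the same fiber exactly when $w'_{(j)} = \lambda_j w_{(j)}$ for some $\lambda_j \in \C^*$ ($j = 1, \dots, \ell$), with $w'_0 \in \C$ otherwise unconstrained (all up to an overall rescaling of the representative). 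In other words, the fibers are the orbits of the block-wise complexified scaling action, whose compact part is precisely the $\T^\ell$-action of \eqref{eq:Tl-action}.

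The heart of the argument is to check that $t^p$ is invariant under this full scaling, not merely under its compact part. The expression $t^p = \prod_{j=1}^\ell \prod_{l=1}^{k_j} t_{j,l}^{p_{j,l}}$ does not involve $w_0$, so invariance in that direction is immediate. Scaling a single block by $\lambda_j = |\lambda_j| e^{i\theta_j} \in \C^*$ sends each phase $t_{j,l} = w_{j,l}/|w_{j,l}|$ to $e^{i\theta_j} t_{j,l}$, independently of $|\lambda_j|$, whence
\[
	\prod_{l=1}^{k_j} t_{j,l}^{p_{j,l}} \longmapsto e^{i\theta_j |p_{(j)}|}\prod_{l=1}^{k_j} t_{j,l}^{p_{j,l}} = \prod_{l=1}^{k_j} t_{j,l}^{p_{j,l}},
\]
the last equality being forced by $|p_{(j)}| = 0$. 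This single computation absorbs both the radial part (no dependence on $|\lambda_j|$, since $t^p$ sees only phases) and the angular part (annihilated by $|p_{(j)}| = 0$). It is worth stressing this point: the $\T^\ell$-invariance recorded before the statement covers only the compact directions, whereas the fibers of $\pi_k$ are genuinely larger, so the complexified computation is what actually delivers constancy of $t^p$ along fibers.

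With constancy in hand I would assemble the descended function directly. On $\mathbb{P}^{k_j-1}(\C)$ set $f_j([v]) = \prod_{l=1}^{k_j} (v_l/|v_l|)^{p_{j,l}}$, which is well defined almost everywhere by the same $|p_{(j)}| = 0$ cancellation, and put $f = \prod_{j=1}^\ell (f_j \circ \mathrm{pr}_j)$ on the product, where $\mathrm{pr}_j$ is the $j$-th projection; a direct substitution gives $f \circ \pi_k = t^p$. Since $|t^p| = 1$ wherever it is defined, $f$ is bounded. The one step deserving care, which I regard as the main (if mild) obstacle, is the $L^\infty$ bookkeeping: one must verify that the coordinate loci where some $w_{j,l}$ vanishes form null sets both on $\PnC$ and on $\prod_{j=1}^\ell \mathbb{P}^{k_j-1}(\C)$, so that $f$ is a genuine element of $L^\infty$ and $\pi_k^*$ is the honest pullback on $L^\infty$. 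This is exactly where the submersion property of $\pi_k$ off these loci is invoked, yielding $t^p = \pi_k^* f$ and hence the claimed inclusion.
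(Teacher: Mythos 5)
Your proof is correct, and its skeleton is the paper's own: the symbol $t^p$ is invariant along the fibers of $\pi_k$, hence descends to a bounded function on $\prod_{j=1}^{\ell}\mathbb{P}^{k_j-1}(\C)$. But the paper records only the two-sentence observation preceding the theorem, namely that $\pi_k$ is the ``induced quotient map'' of the $\T^\ell$-action \eqref{eq:Tl-action} and that $|p_{(j)}|=0$ makes $t^p$ invariant under $\T^\ell$; no further argument is given. Your proposal correctly identifies that this, taken literally, is not enough: the fibers of $\pi_k$ are the orbits of the block-wise $\C^*$-scalings together with an unconstrained $w_0$-coordinate (generically of real dimension $2\ell$), strictly larger than the $\ell$-dimensional $\T^\ell$-orbits --- consistent with $\dim_{\R}\prod_j \mathbb{P}^{k_j-1}(\C) = 2(n-\ell)$ rather than $2n-\ell$. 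Your complexified computation (that $t^p$ sees only the phases, and the common block phase $e^{i\theta_j}$ is annihilated precisely by $|p_{(j)}|=0$) is exactly the verification the paper leaves implicit, and your explicit descended function $f_j([v])=\prod_{l=1}^{k_j}(v_l/|v_l|)^{p_{j,l}}$, well defined on projective classes again by $|p_{(j)}|=0$, together with the observation that the coordinate loci $\{w_{j,l}=0\}$ and their images are null sets, settles the $L^\infty$ statement honestly. So what you do differently is not a different route but a completed one: the paper buys brevity by treating the descent as obvious from torus invariance, while your version makes the theorem robust by proving constancy on the genuinely larger fibers and exhibiting $f$ with $t^p=f\circ\pi_k$ almost everywhere. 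You also tacitly corrected the typo in the target space, which should read $\prod_{j=1}^{\ell}\mathbb{P}^{k_j-1}(\C)$ rather than $\prod_{j=1}^{k_j}\mathbb{P}^{k_j-1}(\C)$.
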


\subsection{Geometric description of the pseudo-homogeneous symbols of the form $b(s_{(1)}, \dots, s_{(\ell)})$}
We will now consider a bounded function $b \in L^\infty(S_+^{k_1 -1}\times \dots \times S_+^{k_\ell -1})$ and the corresponding $k$-pseudo-homogeneous symbol
\begin{align*}
	\PnC &\rightarrow \C \\
		[w] &\mapsto b(s_{(1)}, \dots, s_{(\ell)}),
\end{align*}
where
\[
	s_{(j)} = \frac{1}{|w_{(j)}|} (|w_{j,1}|, \dots, |w_{j,k_j}|),
\]
for all $j = 1, \dots, \ell$.

From this description, it is easy to see that the symbol considered above is invariant under the $\T^n$-action given by \eqref{eq:Tn-action}. We now conclude the following result.

\begin{thm}\label{thm:b(s)-type-Delzant}
	Every symbol of the type $b(s_{(1)}, \dots, s_{(\ell)})$ belongs to $\mu_n^*(L^\infty(D_n))$ where $D_n = \mu_n(\PnC)$ is the Delzant polytope of the projective space $\PnC$. More precisely, we have
	\[
		\{ b(s_{(1)}, \dots, s_{(\ell)}) \mid b \in L^\infty(S_+^{k_1 -1} \times \dots \times S_+^{k_\ell -1}) \} 
		\subset	\{ f \circ \mu_n \mid f \in L^\infty(D_n) \}.	
	\]
	In other words, any such symbol can be considered as a function on the Delzant polytope of the projective space $\PnC$.
\end{thm}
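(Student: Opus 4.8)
The plan is to produce an explicit bounded function $f$ on the polytope $D_n$ such that $b(s_{(1)},\dots,s_{(\ell)}) = f\circ\mu_n$, paralleling the explicit change-of-coordinates argument used for Theorem~\ref{thm:k-quasi-radial-polytope}. The conceptual reason this should work was already noted just before the statement: the symbol $b(s_{(1)},\dots,s_{(\ell)})$ is invariant under the $\T^n$-action \eqref{eq:Tn-action}, and $\mu_n$, being the moment map of that action, is itself constant on $\T^n$-orbits. Since $\mu_n$ in fact separates the $\T^n$-orbits (two points have the same moment-map image exactly when their vectors of moduli agree up to a common scalar, which is precisely the condition for lying in the same orbit), one expects every $\T^n$-invariant bounded symbol to descend to $D_n$; the task is to realize this descent concretely and check that the resulting $f$ lies in $L^\infty(D_n)$.

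First I would write the coordinates of the moment map as $\mu_n([w]) = \frac{1}{2|w|^2}(|w_0|^2,|w_{1,1}|^2,\dots,|w_{\ell,k_\ell}|^2)$, so that each entry of $\mu_n([w])$ is the quantity $|w_{j,l}|^2/(2|w|^2)$. The key elementary computation is the recovery of the sphere coordinates from these entries: since $s_{j,l}^2 = |w_{j,l}|^2/|w_{(j)}|^2$ and $|w_{(j)}|^2 = \sum_{l'=1}^{k_j}|w_{j,l'}|^2$, dividing numerator and denominator by $2|w|^2$ shows that $s_{j,l}^2$ is the $(j,l)$-entry of $\mu_n([w])$ divided by the sum of the entries in its $j$-th block. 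As all these quantities are nonnegative, taking square roots expresses each $s_{j,l}$, and hence $s_{(1)},\dots,s_{(\ell)}$, as a function of $\mu_n([w])$ alone.

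This suggests defining $f$ on $D_n$ as the composite of two maps: first send $x=(x_0,x_{1,1},\dots,x_{\ell,k_\ell})\in D_n$ to the point of $S_+^{k_1-1}\times\dots\times S_+^{k_\ell-1}$ whose $(j,l)$-coordinate is $\sqrt{x_{j,l}\big/\sum_{l'=1}^{k_j}x_{j,l'}}$, and then apply $b$. Unwinding the definitions then gives $b(s_{(1)},\dots,s_{(\ell)}) = f(\mu_n([w]))$, which is the desired identity expressing the symbol as $f\circ\mu_n$.

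The only genuine obstacle is the well-definedness of $f$ as an $L^\infty$ function. The normalization map $x\mapsto s$ above breaks down exactly where some block sum $\sum_{l'}x_{j,l'}$ vanishes; I would observe that this locus is the intersection of $D_n$ with finitely many coordinate hyperplanes and is therefore of measure zero in the polytope. Off this negligible set the map is bounded, with values in $[0,1]$, and measurable, so $f$ is a well-defined element of $L^\infty(D_n)$ and the factorization of the symbol as $f\circ\mu_n$ holds almost everywhere. This is precisely the $L^\infty$ framework in which the statement is phrased, and it mirrors the way the exceptional hyperplane was already handled in the definition of these symbols.
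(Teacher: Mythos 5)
Your proposal is correct and takes essentially the same route as the paper: the paper simply notes the $\T^n$-invariance of the symbol and concludes, implicitly relying on the same change of coordinates you carry out explicitly, namely that $s_{j,l}^2$ is the $(j,l)$-entry of $\mu_n([w])$ divided by its block sum (mirroring the paper's explicit proof of Theorem~\ref{thm:k-quasi-radial-polytope}). Your extra care with the measure-zero locus where a block sum vanishes is a reasonable filling-in of detail that the paper leaves implicit, not a different argument.
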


\subsection{Geometric description of the single-sphere pseudo-homogeneous symbols}\label{subsec:single-sphere-symbols}
By the previous subsections, to describe these symbols it is enough to consider those of the form
\begin{align*}
	\PnC &\rightarrow \C \\
		[w] &\mapsto b(\sigma)
\end{align*}
for $b \in L^\infty(S_+^{n-1})$ and where we have followed the notation from Section~\ref{sec:singlesphere_symbols}. With the definition of the variables involved, these symbols are clearly invariant under the $\T^n$-action given by \eqref{eq:Tn-action}. Hence, we obtain the following result.

\begin{thm}\label{thm:b(sigma)-type-Delzant}
	Every symbol of the type $b(\sigma)$ belongs to $\mu_n^*(L^\infty(D_n))$ where $D_n = \mu_n(\PnC)$ is the Delzant polytope of the projective space $\PnC$. More precisely, we have
	\[
		\{ b(\sigma) \mid b \in L^\infty(S_+^{n-1}) \} 
		\subset	\{ f \circ \mu_n \mid f \in L^\infty(D_n) \}.	
	\]
	In other words, any such symbol can be considered as a function on the Delzant polytope of the projective space $\PnC$.
\end{thm}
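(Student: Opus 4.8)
The plan is to follow the same strategy as in the proof of Theorem~\ref{thm:b(s)-type-Delzant}: the decisive feature is that the symbol $b(\sigma)$ is $\T^n$-invariant, which was already observed from the very definition of $\sigma$. Since $\mu_n$ is the moment map of the $\T^n$-action \eqref{eq:Tn-action} and the coadjoint action of the torus is trivial, $\mu_n$ is itself $\T^n$-invariant; moreover, over the interior of the polytope $D_n$ its fibers are exactly the $\T^n$-orbits. Hence any $\T^n$-invariant bounded symbol is constant along the fibers of $\mu_n$ and should therefore descend to a bounded function on $D_n$. To obtain the precise $L^\infty$ statement asserted, rather than invoke this abstract quotient picture I would produce the function $f$ explicitly, in the spirit of the change of coordinates carried out in Theorem~\ref{thm:k-quasi-radial-polytope}.

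For the explicit step, I would first rewrite $\sigma$ in terms of the moduli $|w_j|$. Using $z = (w_1, \dots, w_n)/w_0$ and $\sigma_u = \rho_u/r$ one checks that the factor $|w_0|$ cancels, so that
\[
	\sigma_u = \frac{|w_u|}{\sqrt{|w_1|^2 + \dots + |w_n|^2}}, \qquad u = 1, \dots, n.
\]
Writing $x = \mu_n([w]) = \frac{1}{2|w|^2}(|w_0|^2, \dots, |w_n|^2)$, the common normalization $2|w|^2$ cancels in the ratio and one obtains $\sigma_u^2 = x_u/(x_1 + \dots + x_n)$. This exhibits $\sigma$ as a function of the moment-map coordinates alone. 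I would then define $f$ on $D_n$ by
\[
	f(x_0, x_1, \dots, x_n) = b\bigg( \sqrt{\tfrac{x_1}{x_1 + \dots + x_n}}, \dots, \sqrt{\tfrac{x_n}{x_1 + \dots + x_n}} \bigg),
\]
so that by construction $b(\sigma) = f \circ \mu_n$, while $\|f\|_\infty \le \|b\|_\infty$ together with the continuity of the inversion map give $f \in L^\infty(D_n)$. This yields the desired inclusion.

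The one point requiring care --- and the main, essentially the only, obstacle --- is the locus where the inversion $x \mapsto \sigma$ degenerates, namely where $x_1 + \dots + x_n = 0$. Since the $x_v$ are nonnegative this happens exactly when $w_1 = \dots = w_n = 0$, that is, at the single point $[1:0:\dots:0]$, whose image is a vertex of $D_n$; there I would simply set $f$ equal to an arbitrary constant. This set has measure zero, and in any case the symbol $b(\sigma)$ is only defined off the hyperplane $\{w_0 = 0\}$ to begin with, so the identity $b(\sigma) = f \circ \mu_n$ holds almost everywhere, which is all that is needed at the $L^\infty$ level. The measurability and boundedness of $f$ are then routine.
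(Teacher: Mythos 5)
Your proposal is correct and takes essentially the same route as the paper, whose entire argument is the observation that $b(\sigma)$ is invariant under the $\T^n$-action \eqref{eq:Tn-action} and therefore descends through the moment map $\mu_n$ to a function on the Delzant polytope. Your explicit formula $f(x)=b\big(\sqrt{x_1/(x_1+\dots+x_n)},\dots,\sqrt{x_n/(x_1+\dots+x_n)}\big)$, together with the treatment of the degenerate vertex $\mu_n([1:0:\dots:0])$ as a null set, merely makes concrete the change of coordinates that the paper leaves implicit (in the spirit of its proof of Theorem~\ref{thm:k-quasi-radial-polytope}).
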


\section{Extended pseudo-homogeneous symbols}\label{sec:extended-symbols}
In this section we bring upfront the second question from Remark~\ref{rmk:DelzantTeoplitzIntegrals}. 

In the geometric description of symbols considered in Section~\ref{sec:momentmaps} we have observed some proper inclusions in Theorems~\ref{thm:tp-type-pik}, \ref{thm:b(s)-type-Delzant} and \ref{thm:b(sigma)-type-Delzant}. This certainly suggests that there could be some room to consider additional symbols for which the description of their Toeplitz operators is possible and useful. In particular, further study of these symbols and their geometric description seems to be needed. We now present a new set of symbols that support these claims. As before, $k \in \N^\ell$ is a fixed partition of $n$.

\begin{defn}\label{def:extended-pseudo-homogeneous-symbol}
	A symbol $\psi \in L^\infty(\PnC)$ is called extended $k$-pseudo-homogeneous if it has the form
	\[  
		\psi([w]) = b(r_1, \dots, r_\ell, s_{(1)}, \dots ,s_{(\ell)}) t^p
		= b(r_1, \dots , r_\ell, s_{(1)}, \dots, s_{(\ell)}) \prod_{j=1}^\ell t_{(j)}^{p_{(j)}},
	\]
	where $b \in L^\infty([0,\infty)^\ell \times S_+^{k_1-1} \times \dots \times S_+^{k_\ell-1})$ and $p \in \Z^n$ satisfies $|p| = 0$.
\end{defn}

We observe that the set of quasi-radial pseudo-homogeneous symbols considered in the previous sections is a proper subset of the set of extended pseudo-homogeneous symbols. Furthermore, in the notation of Section~\ref{sec:singlesphere_symbols} we have
\[
	\sigma = \frac{1}{\sqrt{r_1^2 + \dots + r_\ell^2}} (r_1 s_{(1)}, \dots, r_\ell s_{(\ell)})
\]
from which it follows that every single-sphere quasi-radial pseudo-homogeneous symbol is also an extended pseudo-homogeneous symbol. In other words, all the symbols considered in previous sections are particular cases of the extended pseudo-homogeneous symbols.

A particular case that will be interesting to consider is given by symbols of the form
\begin{equation}\label{eq:extended-symbol}
	\psi([w])=a(r_1, \dots, r_\ell) \prod_{j=1}^\ell b_j(r_{1},\dots, r_{\ell}, s_{(j)}) t_{(j)}^{p_{(j)}},
\end{equation}
where we continue to assume conditions as in Definition~\ref{def:extended-pseudo-homogeneous-symbol}. Following the previous remarks, we observe that the symbols in \eqref{eq:extended-symbol} have as particular case those considered by the expressions \eqref{eq:qrph-symbol}, \eqref{eq:j-single-sphere-pseudo-symbol}.

\begin{thm}\label{thm:Toeplitz_extended}
	Let $\psi \in L^\infty(\C^n)$ be an extended $k$-pseudo-homogeneous symbol whose expression is given as in \eqref{eq:extended-symbol}. Then, the Toeplitz operator $T_\psi$ acting on the weighted Bergman space $\mA^2_m(\PnC) \simeq \PolymCn$ satisfies
	\[
		T_\psi(z^\alpha) =
			\begin{cases}
				\gamma_{\psi,m}(\alpha) z^{\alpha + p}, &\text{ if }  \alpha + p \in J_n(m)  \\
				0, &\text{ if }  \alpha + p \notin J_n(m)
			\end{cases},
	\]
	for every $\alpha \in J_n(m)$, where
	\begin{align*}
		&\gamma_{\psi,m}(\alpha) = \\
		&= \frac{(n+m)!}{(m-|\alpha|)! \prod_{j=1}^\ell(k_j-1+|\alpha_{(j)}|+\frac{1}{2}|p_{(j)}|)!}
			\int_{\R^\ell_+} \frac{a(\sqrt{r_1},\dots ,\sqrt{r_\ell})}{(1+r_1+\dots + r_\ell)^{n+m+1}}  \\
		&\quad\times
			\bigg(
				\int_{\prod_{j=1}^\ell \Delta_{k_j-1}} 
				\prod_{j=1}^\ell 
				\frac{(k_j-1+|\alpha_{(j)}|+\frac{1}{2}|p_{(j)}|)!}{\prod_{l=1}^{k_j}(\alpha_{j,l}+p_{j,l})!}  
					\bigg(1-\bigg(\sum_{l=1}^{k_j-1} s_{j,l}\bigg)\bigg)^{\alpha_{j,k_j}+\frac{1}{2}p_{j,k_j}}  \\ 
		&\qquad\times 
				b_j(\sqrt{r_{1}},\dots , \sqrt{r_{\ell}},\sqrt{s_{(j)}}) 		
					\prod_{l=1}^{k_j-1} s_{j,l}^{\alpha_{j,l}+\frac{1}{2}p_{j,l}}  \dif s_{j,l}
			\bigg) 
			\prod_{j=1}^\ell r_j^{|\alpha_{(j)}|+\frac{1}{2}|p_{(j)}|+k_j-1} \dif r_j.
	\end{align*}
\end{thm}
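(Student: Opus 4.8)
The plan is to follow verbatim the strategy of the proofs of Theorems~\ref{thm:Toeplitz_qrph} and \ref{thm:Toeplitz-j-singlesphere}: compute the matrix coefficient $\langle T_\psi z^\alpha, z^\beta\rangle_m = \langle \psi z^\alpha, z^\beta\rangle_m$ against the orthogonal basis of monomials and then divide by $\langle z^{\alpha+p}, z^{\alpha+p}\rangle_m$ to read off $\gamma_{\psi,m}(\alpha)$.

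First I would write out this inner product using the weighted measure $\nu_m$ together with the decomposition $z_{j,l} = r_j s_{j,l} t_{j,l}$. Separating the toral variables $t_{j,l} \in \T$, the product of circle integrals $\prod_{u=1}^n \int_\T t_u^{\alpha_u + p_u - \beta_u}\,\frac{\dif t_u}{it_u}$ vanishes unless $\beta = \alpha + p$ and equals $(2\pi)^n$ in that case; hence we may assume $\beta = \alpha + p$. This step is identical to the earlier proofs and relies only on the hypothesis $|p|=0$.

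The crucial structural difference appears next. In Theorem~\ref{thm:Toeplitz_qrph}, once the toral integration is done and the spherical coordinates $r_{j,l} = r_j s_{j,l}$ are introduced on each $\R_+^{k_j}$, the remaining integral factors as a single radial integral over $\R_+^\ell$ times $\ell$ independent sphere integrals, precisely because there $b_j$ depended only on $s_{(j)}$. Here $b_j$ depends also on the radial variables $r_1, \dots, r_\ell$, so this factorization fails: after passing to spherical coordinates the sphere integrals over $\prod_{j=1}^\ell S^{k_j-1}$ must be kept \emph{nested} inside the radial integral over $\R_+^\ell$. Apart from this nesting, each individual change of variables is the same as before. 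On each sphere I substitute $s_{j,k_j} = \sqrt{1-(s_{j,1}^2+\dots+s_{j,k_j-1}^2)}$ to pass to $\bfB_+^{k_j-1}$ and then square the coordinates $s_{j,l}\mapsto s_{j,l}^2$ to reach the simplex $\Delta_{k_j-1}$, each squaring contributing the Jacobian factor $2^{-(k_j-1)}$ that accounts for the powers of $2$; on the radial side I apply $r_j \mapsto \sqrt{r_j}$, which converts the denominator $(1+r_1^2+\dots+r_\ell^2)^{n+m+1}$ into $(1+r_1+\dots+r_\ell)^{n+m+1}$ and produces the exponents $|\alpha_{(j)}|+\frac{1}{2}|p_{(j)}|+k_j-1$.

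Finally I would divide by the normalization $\langle z^{\alpha+p}, z^{\alpha+p}\rangle_m = \frac{(m-|\alpha|)!\prod_{j=1}^\ell\prod_{l=1}^{k_j}(\alpha_{j,l}+p_{j,l})!}{m!}$, exactly as in Theorem~\ref{thm:Toeplitz_qrph}; this produces the stated factors $\frac{1}{\prod_{l=1}^{k_j}(\alpha_{j,l}+p_{j,l})!}$ inside the integral and the overall constant $(n+m)!/(m-|\alpha|)!$. The factorials $(k_j-1+|\alpha_{(j)}|+\frac{1}{2}|p_{(j)}|)!$ appearing simultaneously in the numerator inside the integral and in the denominator of the prefactor are inserted and cancelled solely to match the form of the earlier corollaries. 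The main obstacle is purely bookkeeping: because the radial and sphere integrals no longer separate, one must carry the full nested integral over $\R_+^\ell \times \prod_{j=1}^\ell \Delta_{k_j-1}$ throughout and verify that all combinatorial prefactors assemble correctly despite the coupling through $b_j$.
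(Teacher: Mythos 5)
Your proposal is correct and follows essentially the same route as the paper's own proof: toral integration forcing $\beta = \alpha + p$, blockwise spherical coordinates with the sphere integrals kept nested inside the radial integral (exactly how the paper handles the $r$-dependence of the $b_j$), the substitutions $r_j \mapsto \sqrt{r_j}$ and $s_{j,l} \mapsto \sqrt{s_{j,l}}$ absorbing the powers of $2$, and final division by $\langle z^{\alpha+p}, z^{\alpha+p}\rangle_m$. You even correctly identify the purely cosmetic insertion and cancellation of the factorials $(k_j-1+|\alpha_{(j)}|+\tfrac{1}{2}|p_{(j)}|)!$, so nothing is missing.
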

\begin{proof}
	We compute the inner product
	\begin{align*}
		&\langle T_\psi z^\alpha, \zeta^\beta \rangle = \langle \psi z^\alpha, z^\beta \rangle \\
		&= \frac{(n+m)!}{\pi^n m!}
			\int_{\C^n} 
			\frac{\psi(z) z^\alpha \overline{z}^\beta}{(1+|z_1|^2+\dots + |z_n|^2)^{n+m+1}}dV(z) \\
		&= \frac{(n+m)!}{\pi^n m!}
			\int_{\R_+^n} 
			\frac{a(r_1, \dots, r_\ell) \prod_{j=1}^\ell b_j(r_1, \dots, r_\ell, s_{(j)})
					}{(1+\rho_1^2+\dots + \rho_n^2)^{n+m+1}}
					\prod_{u=1}^n \rho_u^{\alpha_u + \beta_u + 1} \dif \rho_u \\
		&\quad\times
			\prod_{u=1}^n \int_\T t_u^{\alpha_u + p_u - \beta_u} \frac{\dif t_u}{it_u} \\
\intertext{This vanishes unless $\beta = \alpha + p$, and this case the computation continues as follows by introducing spherical coordinates in each variable $z_{(j)}$}
		&= \frac{2^n(n+m)!}{m!}
			\int_{\R_+^\ell \times \prod_{j=1}^\ell S^{k_j - 1}} 
			\frac{a(r_1, \dots, r_\ell)}{(1+ r_1^2 +\dots + r_n^2)^{n+m+1}}  \\
		&\quad\times	
				 \prod_{j=1}^\ell b_j(r_1, \dots, r_\ell, s_{(j)})
				 \prod_{l=1}^{k_j} (r_j s_{j,l})^{2\alpha_{j,l} + p_{j,l} + 1}
				 \prod_{j=1}^\ell  r_j^{k_j-1} \dif r_j \dif S_j \\
		&= \frac{2^n(n+m)!}{m!}
			\int_{\R_+^\ell} \frac{a(r_1, \dots, r_\ell) \prod_{j=1}^\ell r_j^{2|\alpha_{(j)}| + |p_{(j)}| + k_j - 1}} {(1+ r_1^2 +\dots + r_n^2)^{n+m+1}} \\
		&\quad\times
			\bigg(\int_{\prod_{j=1}^\ell S^{k_j-1}} \prod_{j=1}^\ell b_j(r_1, \dots, r_\ell, s_{(j)}) 
				\prod_{l=1}^{k_j} s_{j,l}^{2\alpha_{j,l} + p_{j,l} + 1} \dif S_j 
			\bigg) \dif r  \\
		&= \frac{2^{n-\ell}(n+m)!}{m!}
			\int_{\R_+^\ell} \frac{a(\sqrt{r_1}, \dots, \sqrt{r_\ell}) \prod_{j=1}^\ell r_j^{|\alpha_{(j)}| + \frac{1}{2}|p_{(j)}| + k_j - 1}} {(1+ r_1 +\dots + r_n)^{n+m+1}} \\
		&\quad\times
			\bigg(\int_{\prod_{j=1}^\ell S^{k_j-1}} \prod_{j=1}^\ell b_j(\sqrt{r_1}, \dots, \sqrt{r_\ell}, s_{(j)}) 
				\prod_{l=1}^{k_j} s_{j,l}^{2\alpha_{j,l} + p_{j,l} + 1} \dif S_j 
			\bigg) \dif r	\\
\intertext{We now parametrize spheres by balls to obtain}			
		&= \frac{2^{n-\ell}(n+m)!}{m!}
			\int_{\R_+^\ell} \frac{a(\sqrt{r_1}, \dots, \sqrt{r_\ell}) \prod_{j=1}^\ell r_j^{|\alpha_{(j)}| + 	\frac{1}{2}|p_{(j)}| + k_j - 1}} {(1+ r_1 +\dots + r_n)^{n+m+1}} \\
		&\quad\times
			\bigg(\int_{\prod_{j=1}^\ell \bfB_+^{k_j-1}}
				\bigg(1 - \sum_{l=1}^{k_j-1} s_{j,l}^2 \bigg)^{\alpha_{j,l} + \frac{1}{2}p_{j,l}}  \\
		&\qquad\times					
				\prod_{j=1}^\ell b_j(\sqrt{r_1}, \dots, \sqrt{r_\ell}, s_{(j)}) 
					\prod_{l=1}^{k_j-1} s_{j,l}^{2\alpha_{j,l} + p_{j,l} + 1}\dif s_{j,l} 
			\bigg) \dif r	
\intertext{And now balls by simplices}
		&= \frac{(n+m)!}{m!}
			\int_{\R_+^\ell} \frac{a(\sqrt{r_1}, \dots, \sqrt{r_\ell}) \prod_{j=1}^\ell r_j^{|\alpha_{(j)}| + 	\frac{1}{2}|p_{(j)}| + k_j - 1}} {(1+ r_1 +\dots + r_n)^{n+m+1}} \\
		&\quad\times
			\bigg(\int_{\prod_{j=1}^\ell \Delta_{k_j-1}}
				\bigg(1 - \sum_{l=1}^{k_j-1} s_{j,l} \bigg)^{\alpha_{j,l} + \frac{1}{2}p_{j,l}}  \\
		&\qquad\times
				\prod_{j=1}^\ell b_j(\sqrt{r_1}, \dots, \sqrt{r_\ell}, \sqrt{s_{(j)}}) 
					\prod_{l=1}^{k_j-1} s_{j,l}^{\alpha_{j,l} + \frac{1}{2}p_{j,l}}\dif s_{j,l} 
			\bigg) \dif r.	
	\end{align*}
	Finally, we use that
	\[
		\langle z^{\alpha + p}, z^{\alpha + p}\rangle_m 
		= \frac{(\alpha+p)!(m-|\alpha|)!}{m!} 
		= \frac{(m-|\alpha|)!\prod_{j=1}^\ell\prod_{l=1}^{k_j}(\alpha_{j,l}+p_{j,l})!}{m!},
	\]
	because $|p| = 0$, to obtain the result.
\end{proof}

The following is a particular case.

\begin{corollary}\label{cor:Toeplitz_extended_pj0}
	For $\psi$ as above, we assume that $|p_{(j)}| = 0$ for every $j = 1, \dots, \ell$. Then, the function $\gamma_{\psi, m} : J_n(m) \rightarrow \C$ from Theorem~\ref{thm:Toeplitz_extended} satisfies the following.
	\begin{align*}
		&\gamma_{\psi,m}(\alpha) = \\
		&= \frac{(n+m)!}{(m-|\alpha|)! \prod_{j=1}^\ell(k_j-1+|\alpha_{(j)}|)!}
			\int_{\R^\ell_+} \frac{a(\sqrt{r_1},\dots ,\sqrt{r_\ell})}{(1+r_1+\dots + r_\ell)^{n+m+1}}  \\
		&\quad\times
			\bigg(
				\int_{\prod_{j=1}^\ell \Delta_{k_j-1}} 
					\prod_{j=1}^\ell 
					\frac{(k_j-1+|\alpha_{(j)}|)!}{\prod_{l=1}^{k_j}(\alpha_{j,l}+p_{j,l})!}  
				\bigg(1-\bigg(\sum_{l=1}^{k_j-1} s_{j,l}\bigg)\bigg)^{\alpha_{j,k_j}+\frac{1}{2}p_{j,k_j}}  \\ 
		&\qquad\times 
				b_j(\sqrt{r_{1}},\dots , \sqrt{r_{\ell}},\sqrt{s_{(j)}}) 		
				\prod_{l=1}^{k_j-1} s_{j,l}^{\alpha_{j,l}+\frac{1}{2}p_{j,l}}  \dif s_{j,l}
			\bigg) 
				\prod_{j=1}^\ell r_j^{|\alpha_{(j)}|+k_j-1} \dif r_j.
	\end{align*}	
\end{corollary}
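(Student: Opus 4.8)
The plan is to derive this corollary as an immediate specialization of Theorem~\ref{thm:Toeplitz_extended}. Imposing $|p_{(j)}| = 0$ for every $j = 1, \dots, \ell$ is merely a restriction of the hypotheses under which Theorem~\ref{thm:Toeplitz_extended} was already established, so no fresh computation is required: the whole content is to track how the quantities $|p_{(j)}|$ enter the closed formula for $\gamma_{\psi,m}(\alpha)$ and then set them to zero. First I would note that the vanishing condition on $\alpha + p$ that governs the case-split is unaffected by the extra hypothesis, so only the expression for $\gamma_{\psi,m}(\alpha)$ in the case $\alpha + p \in J_n(m)$ needs inspection.

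Next I would identify the three positions in the formula of Theorem~\ref{thm:Toeplitz_extended} where $\tfrac{1}{2}|p_{(j)}|$ occurs: the factorial $(k_j-1+|\alpha_{(j)}|+\tfrac{1}{2}|p_{(j)}|)!$ in the prefactor denominator, the same factorial appearing in the numerator of the product over $j$ inside the integral, and the exponent $|\alpha_{(j)}|+\tfrac{1}{2}|p_{(j)}|+k_j-1$ on $r_j$. Substituting $|p_{(j)}| = 0$ in each of these three places produces precisely the stated formula, in which they become $(k_j-1+|\alpha_{(j)}|)!$, the same factorial again, and $r_j^{|\alpha_{(j)}|+k_j-1}$ respectively.

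The one subtlety worth flagging, and the closest thing to an obstacle here, is to verify that the remaining half-integer data are genuinely untouched by the hypothesis. Concretely, the exponents $\alpha_{j,l}+\tfrac{1}{2}p_{j,l}$ on the simplex variables $s_{j,l}$, the exponent $\alpha_{j,k_j}+\tfrac{1}{2}p_{j,k_j}$ on $1-\sum_{l=1}^{k_j-1} s_{j,l}$, and the factorials $(\alpha_{j,l}+p_{j,l})!$ all survive unchanged, because the hypothesis $|p_{(j)}| = 0$ constrains only the block sum $\sum_{l=1}^{k_j} p_{j,l}$ within each group of coordinates and does not force the individual components $p_{j,l}$ to vanish. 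Once this observation is recorded, the corollary follows at once from Theorem~\ref{thm:Toeplitz_extended}.
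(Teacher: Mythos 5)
Your proposal is correct and matches the paper exactly: the paper offers no separate proof, presenting the corollary as an immediate specialization of Theorem~\ref{thm:Toeplitz_extended} obtained by setting $|p_{(j)}|=0$ in the three places it appears, just as you do. Your added observation that the individual components $p_{j,l}$, and hence the exponents $\alpha_{j,l}+\tfrac{1}{2}p_{j,l}$ and factorials $(\alpha_{j,l}+p_{j,l})!$, are untouched because only the block sums vanish is accurate and is precisely why the formula retains its nontrivial dependence on $p$.
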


As a consequence we also obtain the following result

\begin{corollary}\label{cor:Toeplitz-commute-extended}
	Let $\psi = a \prod_{j=1}^\ell b_j t_{(j)}^{p_{(j)}}$ be a symbol as in Corollary~\ref{cor:Toeplitz_extended_pj0}. Then, the Toeplitz operators $T_a$, $T_{b_j t_{(j)}^{p_{(j)}}}$ pairwise commute on every weighted Bergman space $\mA^2_m(\PnC)$.
\end{corollary}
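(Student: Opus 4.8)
The plan is to verify the commutation relations directly on the orthonormal monomial basis $\{z^\alpha : \alpha \in J_n(m)\}$ of $\mA^2_m(\PnC) \simeq \PolymCn$, exploiting that, by Theorem~\ref{thm:Toeplitz_extended} and Corollary~\ref{cor:Toeplitz_extended_pj0}, each operator in question acts on this basis as a weighted shift. First I would record these actions. Since $a$ is $k$-quasi-radial, its shift vanishes and $T_a$ is diagonal, $T_a(z^\alpha) = \gamma_a(\alpha) z^\alpha$, with $\gamma_a(\alpha)$ depending on $\alpha$ only through the block sums $|\alpha_{(1)}|, \dots, |\alpha_{(\ell)}|$. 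For each $j$, specializing Corollary~\ref{cor:Toeplitz_extended_pj0} to the symbol $b_j t_{(j)}^{p_{(j)}}$ (that is, taking $a = 1$ and $b_i = 1$ for $i \neq j$) gives
\[
	T_{b_j t_{(j)}^{p_{(j)}}}(z^\alpha) = \gamma_j(\alpha)\, z^{\alpha + \widetilde{p}_j}
\]
whenever $\alpha + \widetilde{p}_j \in J_n(m)$ and $0$ otherwise, where $\widetilde{p}_j \in \Z^n$ is supported on the coordinates of the $j$-th block, agrees with $p_{(j)}$ there, and satisfies $|\widetilde{p}_j| = |p_{(j)}| = 0$.

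The decisive point is an invariance property of these multipliers. Because $|p_{(j)}| = 0$, adding $\widetilde{p}_j$ to $\alpha$ leaves every block sum $|\alpha_{(i)}|$ and the total degree $|\alpha|$ unchanged, merely redistributing coordinates inside the $j$-th block. I would then show that $\gamma_j(\alpha)$ depends on $\alpha$ only through the block sums $|\alpha_{(1)}|, \dots, |\alpha_{(\ell)}|$ together with the individual coordinates $\alpha_{j,1}, \dots, \alpha_{j,k_j}$ of its own block. Indeed, in the expression for $\gamma_j$ from Corollary~\ref{cor:Toeplitz_extended_pj0}, each factor indexed by a block $i \neq j$ carries $b_i \equiv 1$ and $p_{(i)} = 0$, so its simplex integral is a Dirichlet integral whose value exactly cancels the accompanying factorial quotient $(k_i-1+|\alpha_{(i)}|)! / \prod_l \alpha_{i,l}!$, leaving the constant $1$; hence no dependence on the individual coordinates of the inactive blocks survives. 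The coupling of the $j$-th block with the radial variables through $b_j(\sqrt{r_1}, \dots, \sqrt{r_\ell}, \sqrt{s_{(j)}})$ enters the remaining radial integral only via the exponents $r_i^{|\alpha_{(i)}| + k_i - 1}$, which depend on the other blocks only through their sums. Consequently
\[
	\gamma_j(\alpha + \widetilde{p}_i) = \gamma_j(\alpha) \quad (i \neq j), \qquad
	\gamma_a(\alpha + \widetilde{p}_j) = \gamma_a(\alpha),
\]
the latter since $\gamma_a$ already depends only on block sums.

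With these facts the commutations are immediate. For $i \neq j$ I would evaluate both compositions on a monomial $z^\alpha$, obtaining
\[
	T_{b_i t_{(i)}^{p_{(i)}}} T_{b_j t_{(j)}^{p_{(j)}}}(z^\alpha) = \gamma_j(\alpha)\,\gamma_i(\alpha + \widetilde{p}_j)\, z^{\alpha + \widetilde{p}_i + \widetilde{p}_j}
\]
and the symmetric expression obtained by interchanging $i$ and $j$. As the shifts $\widetilde{p}_i$ and $\widetilde{p}_j$ act on disjoint blocks and preserve the total degree, the non-vanishing condition $\alpha + \widetilde{p}_i + \widetilde{p}_j \in J_n(m)$ is symmetric in $i$ and $j$; and the invariance above collapses both scalar factors to $\gamma_i(\alpha)\gamma_j(\alpha)$, so the two compositions coincide. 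Running the same computation with $\gamma_a$ in the role of one factor yields $T_a T_{b_j t_{(j)}^{p_{(j)}}} = T_{b_j t_{(j)}^{p_{(j)}}} T_a$. Since the operators agree on a basis, they pairwise commute.

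I expect the main obstacle to be the invariance step, and specifically justifying that the dependence of $b_j$ on \emph{all} radial variables $r_1, \dots, r_\ell$ does not force $\gamma_j$ to depend on the individual coordinates of the other blocks. This is exactly where the hypothesis $|p_{(j)}| = 0$ and the cancellation of the inactive Dirichlet factors are essential: once the multipliers are seen to depend only on the block sums and on the coordinates of their own block, the disjointness of the blocks makes the shifts commute. One must also match the $J_n(m)$-membership conditions in the two orders of composition, but disjointness of the blocks together with $|\widetilde{p}_j| = 0$ makes these conditions agree.
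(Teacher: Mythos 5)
Your proof is correct and takes essentially the same route the paper intends: the paper states this corollary as an immediate consequence of Corollary~\ref{cor:Toeplitz_extended_pj0}, and your weighted-shift verification (specializing to $a=1$, $b_i=1$ for $i\neq j$, the Dirichlet cancellation of the inactive blocks, the invariance of each multiplier under the zero-sum shifts $\widetilde{p}_i$ supported in other blocks, and the matching of the $J_n(m)$-membership conditions in both orders) is exactly the argument left implicit there. In particular you correctly avoid the multiplicativity $\gamma_\psi = \gamma_a\prod_j\gamma_j$, which fails in the extended setting (cf.\ Remark~\ref{rmk:TabnotTaTb_extended}) and is not needed for commutativity.
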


\begin{remark}\label{rmk:TabnotTaTb_extended}
	As in the case of Remark~\ref{rmk:TabnotTaTb} it is not difficult to prove that in the previous result and for suitable symbols we have $T_{a\psi_j} \not= T_a T_{\psi_j}$.
\end{remark}

\section{Extended pseudo-homogeneous symbols on the unit ball}\label{sec:extended_unit_ball}
The extended pseudo-homogeneous symbols introduced in Section~\ref{sec:extended-symbols} can be easily considered on the unit ball $\B^n$. As before, we fix a partition $k \in \N^n$. The notation for coordinates is the same as in the previous sections.

\begin{defn}\label{defn:extended-unit-ball}
	A symbol $\psi \in L^\infty(\B^n)$ is called extended $k$-pseudo-homogeneous if it has the form
	\[  
		\psi(z) = b(r_1, \dots, r_\ell, s_{(1)}, \dots ,s_{(\ell)}) t^p
		= b(r_1, \dots , r_\ell, s_{(1)}, \dots, s_{(\ell)}) \prod_{j=1}^\ell t_{(j)}^{p_{(j)}},
	\]
	where $b \in L^\infty(\bfB_+^\ell \times S_+^{k_1-1} \times \dots \times S_+^{k_\ell-1})$ and $p \in \Z^n$ satisfies $|p| = 0$.
\end{defn}

As before, a particular case that is important to consider is given by
\begin{equation}\label{eq:extended-symbol-ball}
	\psi(z)=a(r_1, \dots, r_\ell) \prod_{j=1}^\ell b_j(r_{1},\dots, r_{\ell}, s_{(j)}) t_{(j)}^{p_{(j)}}.
\end{equation}
Using the similar computations to those from Theorem~\ref{thm:Toeplitz_extended} we obtain the following result. We leave the details to the reader.

\begin{thm}\label{thm:Toeplitz_extended_unit_ball}
	Let $\psi \in L^\infty(\B^n)$ be an extended $k$-pseudo-homogeneous symbol whose expression is given as in \eqref{eq:extended-symbol-ball}. Then, the Toeplitz operators $T_\psi$ action on the weighted Bergman space $\mA^2_\lambda(\B^n)$ satisfies
	\[
		T_\psi(z^\alpha) = 
		\begin{cases}
			\gamma_{\psi,\lambda}(\alpha) z^{\alpha + p}, &\text{ if } \alpha + p \geq 0 \\
			0, &\text{ if } \alpha + p \not\geq 0
		\end{cases},
	\]
	for every $\alpha \geq 0$, where 
	\begin{align*}
		&\gamma_{\psi,\lambda}(\alpha) = \\
		&= \frac{\Gamma(n + |\alpha| + \lambda + 1)}{\Gamma(\lambda+1)\prod_{j=1}^\ell 
			(k_j - 1 + |\alpha_{(j)}| + \frac{1}{2}|p_{(j)}|)!} 
			\int_{\Delta_\ell} a(\sqrt{r_1}, \dots, \sqrt{r_\ell})
				\bigg(1 - \sum_{j=1}^\ell r_j \bigg)^\lambda \\
		&\quad\times 
			\bigg(\int_{\prod_{j=1}^\ell \Delta_{k_j-1}}
				\prod_{j=1}^\ell \frac{(k_j - 1 + |\alpha_{(j)}| + \frac{1}{2}|p_{(j)}|)!}{\prod_{l=1}^{k_j} (\alpha_{j,l} + p_{j,l})!}
					\bigg(1-\bigg(\sum_{l=1}^{k_j-1} s_{j,l}\bigg)\bigg)^{\alpha_{j,k_j}+\frac{1}{2}p_{j,k_j}}  \\
		&\qquad\times 
			b_j(\sqrt{r_{1}},\dots , \sqrt{r_{\ell}},\sqrt{s_{(j)}}) 		
				\prod_{l=1}^{k_j-1} s_{j,l}^{\alpha_{j,l}+\frac{1}{2}p_{j,l}}  \dif s_{j,l}
			\bigg) 
					\prod_{j=1}^\ell r_j^{|\alpha_{(j)}|+\frac{1}{2}|p_{(j)}|+k_j-1} \dif r_j.	
	\end{align*}
\end{thm}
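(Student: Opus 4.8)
The plan is to transcribe the computation of Theorem~\ref{thm:Toeplitz_extended} essentially line by line, substituting the reproducing structure of the projective Bergman space by that of the weighted Bergman space $\mA^2_\lambda(\B^n)$, and isolating the one place where the geometry of the ball genuinely alters the calculation. Since the monomials $z^\alpha$ form an orthogonal basis of $\mA^2_\lambda(\B^n)$, it suffices to compute $\langle T_\psi z^\alpha, z^\beta\rangle_\lambda = \langle \psi z^\alpha, z^\beta\rangle_\lambda$ against the weighted measure $c_\lambda(1-|z|^2)^\lambda\,\dif V(z)$ (with $c_\lambda = \Gamma(n+\lambda+1)/(\pi^n\Gamma(\lambda+1))$) and then divide by $\langle z^{\alpha+p}, z^{\alpha+p}\rangle_\lambda$. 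First I would write $z_u = \rho_u t_u$ and group the coordinates into the blocks $z_{(j)}$ with block radius $r_j = |z_{(j)}|$ and spherical part $s_{(j)}$, exactly as in Section~\ref{sec:qrph_symbols}, so that the symbol $\psi$ from \eqref{eq:extended-symbol-ball} factors into its radial, spherical, and toral pieces.

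Next I would integrate out the torus variables. Each factor $\int_\T t_u^{\alpha_u + p_u - \beta_u}\,\tfrac{\dif t_u}{it_u}$ vanishes unless $\beta_u = \alpha_u + p_u$, which forces $\beta = \alpha + p$ (giving the case distinction in the statement) and contributes $(2\pi)^n$ otherwise. On the event $\beta = \alpha + p$, I would pass to spherical coordinates on each block $\C^{k_j}$, separating $r_j$ from $s_{(j)}$, and then reparametrize each sphere $S^{k_j-1}$ first by the ball $\bfB_+^{k_j-1}$ (via $s_{j,k_j} = \sqrt{1 - \sum_{l} s_{j,l}^2}$) and then by the simplex $\Delta_{k_j-1}$ (via $s_{j,l}\mapsto\sqrt{s_{j,l}}$). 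These substitutions are verbatim those in the proof of Theorem~\ref{thm:Toeplitz_extended} and produce the factors $b_j(\dots,\sqrt{s_{(j)}})$, the terms $(1-\sum_{l}s_{j,l})^{\alpha_{j,k_j}+\frac12 p_{j,k_j}}$, and the monomials $s_{j,l}^{\alpha_{j,l}+\frac12 p_{j,l}}$, together with the normalizing factorials $(k_j-1+|\alpha_{(j)}|+\frac12|p_{(j)}|)!/\prod_l(\alpha_{j,l}+p_{j,l})!$.

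The one genuinely new point is the radial integral over $r = (r_1,\dots,r_\ell)$, and this is where the ball departs from the projective space. On $\B^n$ the domain imposes $\sum_j r_j^2 < 1$ and the weight contributes $(1-|z|^2)^\lambda = (1-\sum_j r_j^2)^\lambda$, in place of the projective kernel factor $(1+\sum_j r_j)^{-(n+m+1)}$. After the substitution $r_j\mapsto\sqrt{r_j}$, the block-radius integral therefore becomes an integral over the simplex $\Delta_\ell$ carrying the factor $(1-\sum_j r_j)^\lambda$ and the weight $\prod_j r_j^{|\alpha_{(j)}|+\frac12|p_{(j)}|+k_j-1}$, which is exactly the form appearing in the statement. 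Finally I would collect constants: combining $c_\lambda$ with $\langle z^{\alpha+p}, z^{\alpha+p}\rangle_\lambda = (\alpha+p)!\,\Gamma(n+\lambda+1)/\Gamma(n+|\alpha|+\lambda+1)$, using $|\alpha+p| = |\alpha|$ and $(\alpha+p)! = \prod_{j,l}(\alpha_{j,l}+p_{j,l})!$ (both consequences of $|p|=0$), yields the prefactor $\Gamma(n+|\alpha|+\lambda+1)/\big(\Gamma(\lambda+1)\prod_j(k_j-1+|\alpha_{(j)}|+\frac12|p_{(j)}|)!\big)$ after the $\pi^n$ and $2^n$ factors are absorbed. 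The main obstacle is precisely the careful bookkeeping of this radial step on the ball: producing the $(1-\sum_j r_j)^\lambda$ factor on $\Delta_\ell$ and correctly matching the Gamma-function constants (where the projective quotient $(n+m)!/(m-|\alpha|)!$ is replaced by ratios of Gamma functions in $\lambda$). Everything else transcribes without modification from Theorem~\ref{thm:Toeplitz_extended}.
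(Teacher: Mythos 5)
Your proposal is correct and follows exactly the route the paper intends: the paper proves Theorem~\ref{thm:Toeplitz_extended_unit_ball} by declaring the computation ``similar to Theorem~\ref{thm:Toeplitz_extended}'' and leaving details to the reader, and you supply precisely those details, correctly isolating the one genuine change (the radial integral over $\Delta_\ell$ with weight $(1-\sum_j r_j)^\lambda$ replacing the projective kernel $(1+\sum_j r_j)^{-(n+m+1)}$) and getting the constant bookkeeping right, since $c_\lambda = \Gamma(n+\lambda+1)/(\pi^n\Gamma(\lambda+1))$ and $\|z^{\alpha+p}\|_\lambda^2 = (\alpha+p)!\,\Gamma(n+\lambda+1)/\Gamma(n+|\alpha|+\lambda+1)$ combine with the absorbed factors of $2$ and $\pi$ to yield the stated prefactor.
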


As a consequence, we have the following commutativity results.

\begin{corollary}\label{cor:Toeplitz_extended_pj0_unit_ball}
	For $\psi$ as above, we assume that $|p_{(j)}| = 0$ for every $j = 1, \dots, l$. Then, the function $\gamma_{\psi,\lambda} : \N^n \rightarrow \C$ from Theorem~\ref{thm:Toeplitz_extended_unit_ball} satisfies
	\begin{align*}
	&\gamma_{\psi,\lambda}(\alpha) = \\
	&= \frac{\Gamma(n + |\alpha| + \lambda + 1)}{\Gamma(\lambda+1)\prod_{j=1}^\ell 
			(k_j - 1 + |\alpha_{(j)}|)!} 
			\int_{\Delta_\ell} a(\sqrt{r_1}, \dots, \sqrt{r_\ell})
				\bigg(1 - \sum_{j=1}^\ell r_j \bigg)^\lambda \\
	&\quad\times 
			\bigg(\int_{\prod_{j=1}^\ell \Delta_{k_j-1}}
				\prod_{j=1}^\ell \frac{(k_j - 1 + |\alpha_{(j)}|)!}{\prod_{l=1}^{k_j} (\alpha_{j,l} + p_{j,l})!}
	\bigg(1-\bigg(\sum_{l=1}^{k_j-1} s_{j,l}\bigg)\bigg)^{\alpha_{j,k_j}+\frac{1}{2}p_{j,k_j}}  \\
	&\qquad\times 
			b_j(\sqrt{r_{1}},\dots , \sqrt{r_{\ell}},\sqrt{s_{(j)}}) 		
			\prod_{l=1}^{k_j-1} s_{j,l}^{\alpha_{j,l}+\frac{1}{2}p_{j,l}}  \dif s_{j,l}
			\bigg) 
			\prod_{j=1}^\ell r_j^{|\alpha_{(j)}|+k_j-1} \dif r_j.	
	\end{align*}	
\end{corollary}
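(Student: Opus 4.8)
The plan is to obtain this result as an immediate specialization of Theorem~\ref{thm:Toeplitz_extended_unit_ball}. First I would recall that the hypothesis of that theorem already determines $\gamma_{\psi,\lambda}(\alpha)$ completely for an arbitrary $p \in \Z^n$ with the global condition $|p| = 0$; the present corollary merely imposes the stronger requirement $|p_{(j)}| = 0$ for each individual block $j = 1, \dots, \ell$, rather than only $|p| = \sum_j |p_{(j)}| = 0$. Thus nothing new needs to be computed: the formula of Theorem~\ref{thm:Toeplitz_extended_unit_ball} is simply evaluated under the additional block-wise vanishing.

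The key observation is that $|p_{(j)}| = 0$ forces $\frac{1}{2}|p_{(j)}| = 0$, so every factorial of the form $(k_j - 1 + |\alpha_{(j)}| + \frac{1}{2}|p_{(j)}|)!$ appearing both in the denominator of the prefactor and inside the integral collapses to $(k_j - 1 + |\alpha_{(j)}|)!$. I would then substitute this simplification into the expression of Theorem~\ref{thm:Toeplitz_extended_unit_ball} verbatim. No other term is affected, because the remaining dependence on $p$ occurs only through the individual entries $p_{j,l}$, which enter the exponents $s_{j,l}^{\alpha_{j,l} + \frac{1}{2}p_{j,l}}$, the factor $\bigl(1 - \sum_l s_{j,l}\bigr)^{\alpha_{j,k_j} + \frac{1}{2}p_{j,k_j}}$, and the factorials $(\alpha_{j,l} + p_{j,l})!$, none of which is governed by the block sum $|p_{(j)}|$.

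There is no genuine obstacle here: the argument is a direct substitution, exactly parallel to the passage from Theorem~\ref{thm:Toeplitz_extended} to Corollary~\ref{cor:Toeplitz_extended_pj0} in the projective case. The only point worth flagging is the distinction between the vanishing of each block sum $|p_{(j)}|$ and the possible non-vanishing of the individual components $p_{j,l}$, which is precisely why the half-integer exponents $\frac{1}{2}p_{j,l}$ and the factorials $(\alpha_{j,l}+p_{j,l})!$ persist in the final expression while the block-level factorials simplify.
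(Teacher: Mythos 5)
Your proposal is correct and coincides with the paper's (implicit) argument: the corollary is stated without proof precisely because it is obtained from Theorem~\ref{thm:Toeplitz_extended_unit_ball} by directly substituting $|p_{(j)}|=0$, exactly parallel to the passage from Theorem~\ref{thm:Toeplitz_extended} to Corollary~\ref{cor:Toeplitz_extended_pj0} that you cite. One small bookkeeping correction: besides the factorials $(k_j-1+|\alpha_{(j)}|+\frac{1}{2}|p_{(j)}|)!$, the exponent in $r_j^{|\alpha_{(j)}|+\frac{1}{2}|p_{(j)}|+k_j-1}$ also depends on the block sum and simplifies to $r_j^{|\alpha_{(j)}|+k_j-1}$ by the same observation $\frac{1}{2}|p_{(j)}|=0$, so your assertion that ``no other term is affected'' is literally inaccurate, though harmless since the substitution yields the stated formula either way.
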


\begin{corollary}\label{cor:Toeplitz-commute-extended-unitball}
	Let $\psi = a \prod_{j=1}^\ell b_j t_{(j)}^{p_{(j)}}$ be a symbol as in Corollary~\ref{cor:Toeplitz_extended_pj0_unit_ball}. Then, the Toeplitz operators $T_a, T_{b_j t_{(j)}^{p_{(j)}}}$ pairwise commute on every weighted Bergman space $\mA^2_\lambda(\B^n)$.
\end{corollary}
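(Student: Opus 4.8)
The plan is to reduce the commutativity to an identity between scalar coefficients, exploiting that every operator involved acts as a weighted shift on the monomial basis $\{z^\alpha\}$ of $\mA^2_\lambda(\B^n)$. Applying Theorem~\ref{thm:Toeplitz_extended_unit_ball} to the single-factor symbols (that is, taking $a \equiv 1$ and $b_i \equiv 1$ for $i \neq j$) each generator acts by
\[
	T_{b_j t_{(j)}^{p_{(j)}}}(z^\alpha) = \gamma_j(\alpha)\, z^{\alpha + \widetilde{p}_j}, \qquad
	T_a(z^\alpha) = \gamma_a(\alpha)\, z^\alpha,
\]
where $\widetilde{p}_j \in \Z^n$ is the extension of $p_{(j)}$ supported on the $j$-th block and the coefficients are read off from Corollary~\ref{cor:Toeplitz_extended_pj0_unit_ball}. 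For two weighted shifts $z^\alpha \mapsto c_1(\alpha) z^{\alpha+q_1}$ and $z^\alpha \mapsto c_2(\alpha) z^{\alpha+q_2}$, composing in both orders shows that they commute if and only if $c_1(\alpha)\,c_2(\alpha+q_1) = c_2(\alpha)\,c_1(\alpha+q_2)$ for every $\alpha$. Thus the statement reduces to verifying this relation for each pair drawn from $T_a$ and the $T_{b_j t_{(j)}^{p_{(j)}}}$.

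First I would record the precise dependence of each coefficient on $\alpha$, writing $m_i = |\alpha_{(i)}|$ for the block sums. The eigenvalue $\gamma_a(\alpha)$ depends only on $(m_1,\dots,m_\ell)$, since for a $k$-quasi-radial symbol the sphere integrals in Corollary~\ref{cor:Toeplitz_extended_pj0_unit_ball} are trivial. For $\gamma_j(\alpha)$ the crucial point is that, because $b_i \equiv 1$ for $i \neq j$, each block-$i$ simplex integral is a Dirichlet integral that collapses to $1$; consequently $\gamma_j(\alpha)$ depends on the detailed multi-index $\alpha_{(j)}$ of the $j$-th block and on the remaining blocks \emph{only} through their sums $m_i$, the latter entering through the outer prefactor and the radial powers $r_i^{m_i + k_i - 1}$ inside the integral over $\Delta_\ell$.

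The decisive observation is that the hypothesis $|p_{(j)}| = 0$ for every $j$ forces each shift $\widetilde{p}_j$ to preserve all block sums, namely $|(\alpha + \widetilde{p}_j)_{(i)}| = |\alpha_{(i)}|$ for all $i$ (trivially for $i \neq j$, and for $i = j$ because $|p_{(j)}| = 0$), while altering only the $j$-th block's multi-index. Combined with the dependence structure above, this yields $\gamma_a(\alpha + \widetilde{p}_j) = \gamma_a(\alpha)$ and $\gamma_i(\alpha + \widetilde{p}_j) = \gamma_i(\alpha)$ whenever $i \neq j$, since in each case neither the block sums nor the detailed multi-index that the coefficient sees is changed by the foreign shift. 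The commutator identity $c_1(\alpha)\,c_2(\alpha+q_1) = c_2(\alpha)\,c_1(\alpha+q_2)$ then collapses on both sides to the same product, $\gamma_i(\alpha)\gamma_j(\alpha)$ for a pair $T_{b_i t_{(i)}^{p_{(i)}}}, T_{b_j t_{(j)}^{p_{(j)}}}$ with $i \neq j$, and to $\gamma_a(\alpha)\gamma_j(\alpha)$ for the pair $T_a, T_{b_j t_{(j)}^{p_{(j)}}}$, giving pairwise commutativity.

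I expect the main obstacle to be making the dependence structure of $\gamma_j(\alpha)$ fully precise: one must check that the block-$i$ contributions with $b_i \equiv 1$ reduce to Dirichlet integrals equal to $1$, so that $\gamma_j$ depends on the blocks $i \neq j$ only through the sums $m_i$ and never through their finer multi-indices, despite the coupling of all the $r_i$ through the single integration over $\Delta_\ell$. Once this invariance under the foreign shifts is secured, the algebraic reduction is immediate and the claimed commutativity follows.
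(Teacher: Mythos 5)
Your proposal is correct and is essentially the paper's own (implicit) argument: the corollary is presented there as a direct consequence of Theorem~\ref{thm:Toeplitz_extended_unit_ball} and Corollary~\ref{cor:Toeplitz_extended_pj0_unit_ball}, exactly via the weighted-shift structure on monomials, the collapse to $1$ of the Dirichlet integrals for the blocks with $b_i \equiv 1$, and the resulting invariance of each coefficient under the foreign shifts $\widetilde{p}_j$, which preserve all block sums because $|p_{(j)}| = 0$. The one point worth stating explicitly is the out-of-range case $\alpha + \widetilde{p}_j \not\geq 0$: since the shifts act on disjoint blocks, both compositions vanish simultaneously there, so your commutation criterion holds under the usual zero convention.
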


\begin{remark}\label{rmk:TabnotTaTb_extended_unitball}
	In this case we have $T_{a\psi_j} \not= T_a T_{\psi_j}$ as in the previous corresponding remarks.
\end{remark}


\begin{thebibliography}{XX}
	\bibitem{Audin}  Audin, Mich\`ele: \textit{Torus actions on symplectic manifolds}. Second revised edition. Progress in Mathematics, 93. Birkh\"auser Verlag, Basel, 2004.

	\bibitem{Vasilevski2015}  Garc\'{\i}a, A.; Vasilevski, N.: \textit{Toeplitz Operators on the Weighted Bergman Space over the Two-Dimensional Unit Ball}, Journal of Function Spaces (2015), DOI:10.1155/2015/306168, p.10. 

	\bibitem{GQV-disk}  Grudsky, S.; Quiroga-Barranco, R.; Vasilevski, N.: \textit{Commutative C∗-algebras of Toeplitz operators and quantization on the unit disk}. J. Funct. Anal. 234 (2006), no. 1, 1--44.

	\bibitem{MR-SN-RO}  Morales-Ramos, Miguel A.; Sanchez-Nungaray, Armando; Ramirez-Ortega, Josue: \textit{Toeplitz operators with quasi-separately radial symbols on the complex projective space}, Bol. Soc. Mat. Mex. (3) \textbf{22} (2016), no. 1, 213--227. 
	
	\bibitem{Ortega-Ratiu}  Ortega, Juan-Pablo; Ratiu, Tudor S.: \textit{Momentum maps and Hamiltonian reduction}. Progress in Mathematics, 222. Birkh\"auser Boston, Inc., Boston, MA, 2004.
	
	\bibitem{QS-Projective} Quiroga-Barranco, Raul; Sanchez-Nungaray, Armando: \textit{Commutative $C^*$-algebras of Toeplitz operators on complex projective spaces}, Integral Equations and Operator Theory \textbf{71} (2011), no. 2, 225-243.
	
	\bibitem{QS-Quasi-Projective} Quiroga-Barranco, Raul; Sanchez-Nungaray, Armando: \textit{Toeplitz operators with quasi-radial quasi-homogeneous symbols and bundles of Lagrangian frames}, Journal of Operator Theory \textbf{71} (2014), no. 1, 199--222.
	
	\bibitem{QV-2007} Quiroga-Barranco, Raul; Vasilevski, Nikolai: \textit{Commutative C*-Algebras of Toeplitz Operators on the Unit Ball, I. Bargmann-Type Transforms and Spectral Representations of Toeplitz Operators}, Integral Equations and Operator Theory \textbf{59} (2007), no. 3, 379--419.
	
	\bibitem{QV-2008} Quiroga-Barranco, Raul; Vasilevski, Nikolai: \textit{Commutative C*-Algebras of Toeplitz Operators on the Unit Ball, II. Geometry of the Level Sets of Symbols}, Integral Equations and Operator Theory \textbf{60} (2008), no. 1, 89--132.
		
	\bibitem{Vasilevski2010} Vasilevski, Nikolai: \textit{Quasi-Radial Quasi-Homogeneous Symbols and Commutative Banach Algebras of Toeplitz Operators}, Integral Equations and Operator Theory \textbf{66} (2010), no. 1, 141--152.
	
	\bibitem{Vasilevski-Pseudo} Vasilevski, Nikolai: \textit{On Toeplitz operators with quasi-radial and pseudo-homogeneous symbols}, preprint.
	
\end{thebibliography}
\end{document}